\theoremstyle{definition}
\newtheorem{defn}{Definition}[section]
\newtheorem{ex}[defn]{Example}
\newtheorem{remark}[defn]{Remark}
\newtheorem*{acknowledgement}{Acknowledgement}
\newtheorem*{convention}{Convention}
\theoremstyle{plain}
\newtheorem{theorem}[defn]{Theorem}
\newtheorem{corollary}[defn]{Corollary}
\newtheorem{lemma}[defn]{Lemma}
\newtheorem{prop}[defn]{Proposition}
\newcommand{\abs}[1]{\left\lvert #1 \right\rvert} 
\newcommand{\gnrt}[1]{\left\langle #1 \right\rangle} 
\newcommand{\Z}{\mathbb{Z}}
\newcommand{\N}{\mathbb{N}}
\newcommand{\R}{\mathbb{R}}
\newcommand{\Star}{\textnormal{Star}}
\newcommand{\curly}[1]{\mathcal{#1}}
\newcommand{\wo}{\setminus} 
\definecolor{comment}{rgb}{.2,.2,.2}
\newcommand{\trop}{\textnormal{trop}}
\newcommand{\newl}{\indent\par}
\numberwithin{equation}{section}
\definecolor{DarkGreen}{rgb}{0,0.5,0}
\definecolor{DarkRed}{rgb}{0.8,0,0}
\title{Tropical linear spaces and tropical convexity}
\author{Simon Hampe}
\newcommand{\tconv}{\textnormal{tconv}}
\newcommand{\tpn}[1]{\mathbb{R}^{#1}/\textbf{1}}
\newcommand{\cone}{\textnormal{cone}}
\newcommand{\size}{\textnormal{het}}
\newcommand{\type}{\textnormal{part}}
\newcommand{\rank}{\textnormal{rank}}
\newcommand{\rec}{\textnormal{rec}}
\newcommand{\Imin}{I_{\min}}
\newcommand{\Imax}{I_{\max}}
\newcommand{\tnorm}[1]{\lVert #1 \rVert_\trop}
\definecolor{light-gray}{gray}{0.8}
\renewcommand{\epsilon}{\varepsilon}
\tikzset{
  >=latex  
}
\begin{document}

\begin{abstract}
 In classical geometry, a linear space is a space that is closed under linear combinations. In tropical geometry, it has long been a consensus that tropical varieties defined by valuated matroids are the tropical analogue of linear spaces. It is not difficult to see that each such space is tropically convex, i.e.\ closed under tropical linear combinations. However, we will also show that the converse is true: Each tropical variety that is also tropically convex is supported on the complex of a valuated matroid. We also prove a tropical local-to-global principle: Any closed, connected, locally tropically convex set is tropically convex.
\end{abstract}

\maketitle

\section{Introduction}

It has long been a consensus what the tropical analogue of a linear space should be. Sturmfels showed in \cite{ssolving} that the tropicalization of a complex variety defined by linear equations depends only on a matroid $M$ associated to these equations. One can give this tropical variety, the \emph{matroidal fan} or \emph{Bergman fan} of $M$ in various purely combinatorial ways: E.g.\ through its circuits, its bases or its lattice of flats \cites{fsmatroid,akbergman}. One can do this for any matroid, though only realizable matroids yield tropical varieties that are tropicalizations of algebraic linear spaces. In the case of fields with a nontrivial valuation, the tropicalization of a linear space is defined by a \emph{valuated matroid} $(M,w)$. This notion was originally introduced by Dress and Wenzel \cite{DressWenzel}. It is given by a matroid $M$ and the additional data of a \emph{valuation} $w$ on its bases. Again, the tropical space can be defined for any such object and it was soon established that the associated tropical varieties should be called \emph{tropical linear spaces}.

This terminology is further justified by the fact that being a tropical linear space is equivalent to this space having \emph{degree one}: This means that it intersects the linear space of complementary dimension associated to the uniform matroid  in exactly one point (where intersection is to be understood as \emph{stable intersection}). A proof of this can be found in \cite{fchowpolytopes}, though the statement seems to have been known for longer (see for example \cite{imstropicalgeometry}).

Someone familiar with tropical arithmetic might expect a different definition. In the algebraic world, a linear space is simply a space that is closed under linear combinations. On the tropical side, addition and multiplication are replaced by $\oplus = \max$ and $\odot = +$. Using these one can define tropical vector addition and tropical scalar multiplication. One might then be tempted to define a tropical linear space as a space that is closed under tropical linear combinations. This property is well-known under the name of \emph{tropical convexity}. At first glance, this might seem to be a misnomer, but its justification quickly becomes clear when looking at the corresponding literature. It turns out that classical and tropical convexity are closely related. Develin and Sturmfels first introduced the concept into the tropical world \cite{dstropicalconvexity} and proved --- among other things --- that there is a tropical Farkas' Lemma. Gaubert and Katz prove in \cite{gkminimalhalfspaces} that \emph{tropical polytopes}, i.e.\ the convex hulls of finitely many points, can also be written as the intersection of finitely many tropical halfspaces. Develin and Yu showed that tropical polytopes are tropicalizations of actual polytopes \cite{dytropicalcellular}. Tropical convexity has connections to many fields, such as graph theory, optimization, resolutions of monomial ideals or subdivisions of polytopes (see for example \cites{abgjtropsimplex,agjmeanpayoff,bycellular,frstiefeltropical,jlweighted}).

It becomes readily apparent that simply demanding tropical convexity will not in general produce sets that are tropical linear spaces in the approved sense. However, when adding the prerequisite that the set be supported on a tropical variety, i.e.\ be a \emph{balanced} polyhedral complex, the statement becomes true. In fact, it was already  well-known that any tropical linear space (meaning a space associated to a valuated matroid) is a tropical variety supported on a tropically convex set. In this paper, we prove that the converse is also true:

\begin{theorem}\label{main_theorem}
 Let $X$ be a tropical variety in $\tpn{n}$. Then $\abs{X}$ is tropically convex, if and only if $\abs{X} = B(M,w)$ for some valuated matroid $(M,w)$ on $[n]$. In other words, $X$ is the projectivisation of a space closed under tropical linear combinations if and only if $X$ is supported on a tropical linear space. 
\end{theorem}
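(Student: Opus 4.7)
The forward direction, that $B(M,w)$ is always tropically convex, is already known; I concentrate on the converse. My first move is to localize: for each $p \in \abs{X}$, the polyhedral star $\Star_p(\abs{X}) = \lim_{t \to 0^+} (\abs{X}-p)/t$ is a balanced fan, and since tropical convexity is preserved under translations, positive scalings, and Hausdorff limits, this star is again tropically convex. Thus it suffices to solve the fan case: \emph{a tropically convex balanced fan $F \subset \tpn{n}$ is the Bergman fan of a matroid on $[n]$}. Once every star of $\abs{X}$ is a matroidal Bergman fan, the underlying matroid $M$ can be read off from the recession fan $\rec(\abs{X})$ (which is itself a tropically convex balanced fan), and $w$ from the heights at which the bounded cells of $\abs{X}$ sit; one then checks that $\abs{X}$ and $B(M,w)$ have matching stars at every point, hence are equal as polyhedral complexes.

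For the fan case, I would invoke the degree-one characterization of tropical linear spaces mentioned in the introduction: it suffices to show that $F$ has tropical degree one, i.e.\ that its stable intersection with a complementary uniform-matroid Bergman fan $L$ is a single point of multiplicity one. After translating $L$ into generic position, the stable intersection becomes a set-theoretic intersection consisting of finitely many points. Now both $F$ and $L$ are tropically convex, tropical convexity is preserved under intersection (and under translations of $L$), and a finite, nonempty tropically convex subset of $\tpn{n}$ is a single point, since any two distinct points generate a nondegenerate tropical segment. So $\abs{F \cap L}$ is one point. That its multiplicity is $1$ then follows from transversality together with the fact that the rays of $L$ span a saturated sublattice of the ambient one.

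The principal obstacle is precisely the multiplicity-one step in the fan case. Collapsing the intersection to a single point is the clean part; what requires more care is extracting from tropical convexity alone that the local lattice structure of $F$ at its intersection with $L$ is compatible with $L$'s. A natural alternative route is to proceed combinatorially: reinterpret closure of $F$ under tropical $\max$-combinations as an exchange-type axiom on the subsets of $[n]$ indexing the rays of $F$, and recognize these as the flats of a matroid $M_F$, so that $F = B(M_F)$ is produced directly. Either route then feeds into the assembly step above, yielding a valuated matroid $(M,w)$ with $\abs{X} = B(M,w)$.
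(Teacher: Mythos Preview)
Your overall architecture---localize to stars, handle the fan case, reassemble via the recession fan---matches the paper's, and the set-theoretic half of your degree-one argument is correct and rather nice: a generic translate of $L$ stays tropically convex, intersections of tropically convex sets are tropically convex, and a finite nonempty tropically convex set is a singleton. The gap is exactly where you flag it, and it is not merely technical. The stable-intersection multiplicity at that single point is $\omega_F(\sigma)\,\omega_L(\tau)\,[\Lambda:\Lambda_\sigma+\Lambda_\tau]$, and nothing you have said controls $\omega_F(\sigma)$. In fact the assertion ``$F$ has degree one'' is false as stated: for any matroid $M$, the fan $k\cdot B(M)$ is a tropically convex balanced fan of degree $k$. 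The correct target is $\abs{F}=B(M)$ as sets, not $F=B(M)$ as weighted cycles, so a degree argument cannot close the fan case without first pinning down the weights. Your lattice-index justification is also insufficient: that the rays of $L$ span a saturated sublattice does not by itself force $\Lambda_\sigma+\Lambda_\tau$ to equal the full ambient lattice; one needs information about $\sigma\subset F$ as well.

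The paper takes precisely your ``alternative route'' for the fan case: it shows that $\curly{F}_F=\{G\subseteq[n]:v_G\in\abs{F}\}$ satisfies the flat axioms of a matroid---closure under intersection comes from tropical convexity, while the partition axiom is extracted from the balancing equation at codimension-one cones of the chain subdivision---yielding $\abs{F}=B(M_F)$. Irreducibility of matroidal fans then gives $\Star_X(p)=k_p\cdot B(M(p))$; connectedness of $\abs{X}$ forces $k_p$ to be constant, so $X=k\cdot Y$ with $Y$ of constant weight one. For the reassembly the paper does \emph{not} construct $w$ and match stars: it shows (via a separate lemma that tropical convexity forces each maximal recession cone to arise from a unique maximal cell) that $\rec(Y)=B(M)$ with all weights one, and then invokes the black-box equivalence $\rec(Y)=B(M)\Leftrightarrow Y=B(M,w)$ (Theorem~\ref{prelim_thm_valuated_matroid}), which itself rests on the degree-one characterization. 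Your ``read off $w$ from heights and check stars match'' would amount to reproving that equivalence from scratch.
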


In Section \ref{section_prelim} we will review basic definitions and facts about tropical convexity, tropical varieties, valuated matroids and their associated varieties. We also include a proof of the fact that a tropical linear space is tropically convex. In Section \ref{section_tconv} we collect results about general tropically convex complexes (i.e.\ that do not require balancing). We show that tropical convexity passes to recession fans and is  locally preserved. We also prove that any tropically convex fan of dimension $d$ is contained in the $d$-skeleton of the normal fan of the permutohedron. Section \ref{section_proof} then contains the actual proof of Theorem \ref{main_theorem}. We prove the statement first for fans and trivially valuated matroids. The general result then follows (with a bit more work) from the fact that being a tropical linear space is also equivalent to having a recession fan that is a tropical linear space. In section \ref{section_local}, we prove a tropical local-to-global convexity theorem:

\begin{theorem}\label{conv_theorem}
  Let $X \subseteq \tpn{n}$ be a closed, connected set. If $X$ is locally tropically convex, then $X$ is tropically convex.
\end{theorem}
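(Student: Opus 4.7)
The plan is to adapt the classical proof of Tietze's theorem (that a closed, connected, locally convex subset of $\R^n$ is convex) to the tropical setting. Fix a basepoint $x_0 \in X$ and set
\[
V \;:=\; \{\, y \in X : [x_0, y]_{\trop} \subseteq X \,\}.
\]
Since $x_0 \in V$ and $X$ is connected, it is enough to show that $V$ is both open and closed in $X$; since $x_0$ was arbitrary, this gives the tropical convexity of $X$.

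The closedness of $V$ is a direct consequence of the closedness of $X$ together with the (Hausdorff) continuity of the tropical-segment map $(x, y) \mapsto [x, y]_{\trop}$: if $y_n \in V$ converges to some $y \in X$, then $[x_0, y_n]_{\trop} \to [x_0, y]_{\trop}$ in Hausdorff distance, and each point of the limit is a limit of points of the closed set $X$.

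For openness, fix $y \in V$ and parameterize $[x_0, y]_{\trop}$ by $\gamma \colon [0, 1] \to \tpn{n}$. Using compactness of the segment and local tropical convexity, I cover it by finitely many open balls $B_1, \ldots, B_k$ with each $B_i \cap X$ tropically convex, and choose a partition $0 = t_0 < t_1 < \cdots < t_k = 1$ such that $\gamma([t_{i-1}, t_i]) \subseteq B_i$. By continuity of the tropical segment in its endpoints, for every $y' \in X$ sufficiently close to $y$ the corresponding parameterization $\gamma'$ of $[x_0, y']_{\trop}$ still satisfies $\gamma'([t_{i-1}, t_i]) \subseteq B_i$ for every $i$.

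The main obstacle is that the intermediate points $\gamma'(t_i)$ need not themselves lie in $X$, so one cannot directly invoke tropical convexity of $B_i \cap X$ to conclude $\gamma'([t_{i-1}, t_i]) \subseteq X$. To sidestep this, I would exploit two specifically tropical features: a sub-segment of a tropical segment is again a tropical segment, and $[x_0, y']_{\trop}$ is piecewise linear with at most $n-1$ breakpoints whose combinatorial types vary continuously in $y'$. The strategy is then to select pivot points $p_i \in B_i \cap B_{i+1} \cap X$ near each $\gamma(t_i)$ (which exist because $\gamma(t_i) \in X$) and to proceed by induction on $k$: tropical convexity in each $B_i \cap X$ applied to the pair $p_{i-1}, p_i$ places $[p_{i-1}, p_i]_{\trop}$ inside $X$, and the piecewise-linear description of $[x_0, y']_{\trop}$ should then allow one to "straighten" the resulting polygonal path in $X$ into the actual tropical segment. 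The delicate step, and the genuine obstacle, is this straightening: a naïve concatenation only produces a polygonal path through the pivots in $X$, not the single tropical segment $[x_0, y']_{\trop}$ itself, so one has to match the pivots $p_i$ up with actual breakpoints of $[x_0, y']_{\trop}$ using the combinatorial continuity of tropical segments together with closedness of $X$.
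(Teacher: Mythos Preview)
Your open--closed argument has a real gap at exactly the point you flag. Closedness of $V$ is fine, but openness is not: once you move from $y$ to a nearby $y'\in X$, the breakpoints of $[x_0,y']_{\trop}$ are determined by $\type(y'-x_0)$ and there is no reason at all for them to lie in $X$. Choosing pivots $p_i\in X$ near $\gamma(t_i)$ only produces a concatenation of tropical segments in $X$, i.e.\ a tropical \emph{path} from $x_0$ to $y'$; it does not recover the single segment $[x_0,y']_{\trop}$, and ``combinatorial continuity'' together with closedness of $X$ cannot bridge that gap, because closedness only helps for limits of points already known to be in $X$, not for points merely close to $X$. This is precisely where the non--unique--geodesic nature of $(\tpn{n},\tnorm{\cdot})$ bites: in the classical Tietze--Nakajima proof the straightening works because a shortest path locally must be the straight segment, and there is only one such segment; tropically there are many geodesics, so a shortest tropical path from $x_0$ to $y'$ in $X$ need not be $[x_0,y']_{\trop}$.

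The paper takes a different route that confronts this difficulty directly. It works with the intrinsic distance $d_X$ (infimum over lengths of tropical paths in $X$) and performs a dyadic bisection: at each stage one picks a $d_X$--midpoint $z$ of the current pair which, among all such midpoints, minimises $\tnorm{z-\tconv\{x,y\}}$. The crucial technical step is a lemma (Lemma~\ref{lemma_threepoints}) showing that if both $\tconv\{x,z\}$ and $\tconv\{z,y\}$ lie in $X$, then such a minimising midpoint must already lie on $\tconv\{x,y\}$; the proof is a careful analysis of the index sets $\Imax(x-z)$, $\Imax(y-z)$ and uses local tropical convexity to nudge $z$ strictly closer to the segment otherwise. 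A compactness argument then gives a uniform radius of local convexity on the relevant region, so for $n$ large the dyadic points are close enough that the hypotheses of the lemma are met, and one climbs back up to $\tconv\{x,y\}\subseteq X$. Your outline contains nothing playing the role of this lemma, and without it the straightening step does not go through.
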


From this we deduce that being a tropical linear space is a local property.

\begin{acknowledgement}
 The author was partially supported by DFG grants 4797/1-2 and JO366/3-2\footnote{This grant is part of the DFG priority project SPP 1489 (\url{www.computeralgebra.de})} and EPSRC grant EP/I008071/1. I would like to thank Michael Joswig for many helpful suggestions.
\end{acknowledgement}

\section{Preliminaries}\label{section_prelim}

\begin{convention}
 Throughout this paper we use $\oplus = \max$ as tropical addition. Of course, all results still hold in the $\min$-world, one simply has to \enquote{invert} the definition of tropical linear spaces as well. We also write $\odot = +$ for tropical multiplication.
\end{convention}

\subsection{Tropical convexity}

\begin{defn}
Let $x,y \in \R^n$. We define the tropical sum of $x$ and $y$ to be the componentwise tropical sum: $x\oplus y := (x_1 \oplus y_1,\dots,x_n \oplus y_n)$. Similarly, for $\lambda \in \R, x \in \R^n$ we define tropical scalar multiplication to be componentwise tropical multiplication: $\lambda \odot x := (x_1 + \lambda, \dots,x_n + \lambda)$.

 A subset $S$ of $\R^n$ is called \emph{tropically convex}, if for all $x,y \in S, \lambda,\mu \in \R$ we have
$$(\lambda \odot x) \oplus (\mu \odot y) \in S.$$

The \emph{tropical convex hull} of a set $T$, denoted by $\tconv(T)$, is the smallest tropically convex set containing $T$. By \cite{dstropicalconvexity} it is equal to the set of all tropical linear combinations of elements in $T$.
\end{defn}

\begin{remark}
 It is easy to see that any tropically convex set is invariant under translation by multiples of $(1,\dots,1)$. Hence it is customary to consider subsets $S'$ of the \emph{tropical projective torus} $\tpn{n}$, where $\textbf{1} := \gnrt{(1,\dots,1)}$. We say that such a set $S'$ is tropically convex, if its preimage under the quotient map $\R^n \to \tpn{n}$ is. Note, however, that tropical arithmetic operations are not actually well-defined on $\tpn{n}$.
\end{remark}

\begin{lemma}[{\cite[Proposition 3]{dstropicalconvexity}}]\label{lemma_tconv_pts}
 Let $x,y \in \tpn{n}$. Then the tropical convex hull $\tconv\{x,y\}$ is of the form $(\bigcup_{i=1}^k l_i)$, where the $l_i$ are consecutive line segments connecting $x$ and $y$, whose slopes are linearly independent $(0,1)$-vectors. Furthermore, the number of these line segments is $k := \abs{\{x_i - y_i; i=1,\dots,n\}}-1$.
\end{lemma}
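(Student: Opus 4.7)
The plan is to parametrize $\tconv\{x,y\}$ by a single real parameter and then track how the coordinates of the parametrized point change piecewise-linearly.

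First I would note that by the translation-invariance of tropical convexity, adding a common constant $c$ to both $\lambda$ and $\mu$ only changes $(\lambda\odot x)\oplus(\mu\odot y)$ by a multiple of $\textbf{1}$. Hence, descending to $\tpn{n}$, one can set $\mu = 0$ and write
\[
\tconv\{x,y\} \;=\; \bigl\{(\lambda\odot x)\oplus y \;:\; \lambda\in\R\bigr\}/\textbf{1}.
\]
Working in $\R^n$ instead of the quotient, the $i$-th coordinate of $(\lambda\odot x)\oplus y$ is $\max(\lambda+x_i,\,y_i)$, which as a function of $\lambda$ changes slope precisely at $\lambda=y_i-x_i$.

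Next I would enumerate the breakpoints. Let $c_1<c_2<\dots<c_m$ denote the distinct values in $\{y_i-x_i : i=1,\dots,n\}$, so $m=\abs{\{x_i-y_i : i=1,\dots,n\}}$. I claim:
\begin{itemize}
\item[(a)] for $\lambda\leq c_1$, every coordinate satisfies $\lambda+x_i\leq y_i$, so the image equals $y$;
\item[(b)] for $\lambda\geq c_m$, every coordinate satisfies $\lambda+x_i\geq y_i$, so the image equals $\lambda\odot x$, which represents $x$ in $\tpn{n}$;
\item[(c)] for $\lambda\in[c_j,c_{j+1}]$ with $1\leq j\leq m-1$, the $i$-th coordinate equals $\lambda+x_i$ whenever $y_i-x_i\leq c_j$ and equals $y_i$ otherwise; hence the image traces out an affine segment whose slope vector is $\mathbf{1}_{S_j}$, where $S_j:=\{i : y_i-x_i\leq c_j\}\subsetneq[n]$.
\end{itemize}
This produces $m-1=k$ consecutive line segments, joined at the images of $\lambda=c_j$ for $j=1,\dots,m$, with $(0,1)$-slopes and with endpoints $x$ and $y$.

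Finally I would verify that the slope vectors $\mathbf{1}_{S_1},\dots,\mathbf{1}_{S_{m-1}}$ are linearly independent modulo $\textbf{1}$. By construction the chain of supports is strictly increasing, $\emptyset\subsetneq S_1\subsetneq\dots\subsetneq S_{m-1}\subsetneq[n]$, since each $c_j$ corresponds to at least one new index entering. Suppose $\sum_{j=1}^{m-1}a_j\mathbf{1}_{S_j}=c\cdot\textbf{1}$. Evaluating on any index outside $S_{m-1}$ forces $c=0$; then evaluating on an index in $S_{m-1}\setminus S_{m-2}$ forces $a_{m-1}=0$, and descending through the chain forces all $a_j=0$. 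The only slightly delicate point, and the one I expect to be the main place to be careful, is this bookkeeping of which coordinate is ``active'' in each interval and the resulting independence argument; everything else is direct computation with the $\max$-parametrization.
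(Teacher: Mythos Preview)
Your argument is correct and is essentially the standard parametrization proof. The paper does not give its own proof of this lemma---it cites \cite[Proposition~3]{dstropicalconvexity}---but the Remark immediately following the lemma spells out precisely the same decomposition: with $\Delta = y-x$ and $\type(\Delta)=(I_1,\dots,I_s)$ ordered descendingly, the vertices are $p_j = x + \sum_{i=2}^{j}(\Delta(I_{i-1})-\Delta(I_i))\,e_{F_{i-1}}$ and the slopes are $e_{F_j}$ for the strict chain $F_1\subsetneq\dots\subsetneq F_{s-1}$. Your sets $S_j=\{i:\Delta_i\le c_j\}$ are simply the complements $[n]\setminus F_{s-j}$, so modulo $\mathbf{1}$ your slopes agree (up to sign and reversal of direction, since you run from $y$ to $x$ as $\lambda$ increases) with the paper's $e_{F_j}$; the two descriptions are the same.
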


\begin{remark}\label{prelim_remark_troplinesegment}
 Let us make this statement more concrete: Every element $x \in \tpn{n}$ has a well-defined \emph{heterogeneity}: 
  $$\size(x) := \abs{\{x_i; i=1,\dots,n\}}.$$
  We can also define a \emph{partition} associated to $x$, $\type(x) = I_1 \cup \dots \cup I_{\size(x)}$ of $[n]$ ordering the entries of $x$ descendingly. More precisely:  
   \begin{itemize}
  \item For any $j = 1,\dots,\size(x)$ and $k,l \in I_j$, we have $x_k = x_l =: x(I_j)$.
  \item If $j < j'$, then $x(I_j) > x(I_{j'})$.
 \end{itemize}

  Now fix representatives $x,y$ of two elements in $\tpn{n}$ and set $\Delta := \Delta(x,y) = y-x$. Assume that $\type(\Delta) = (I_1,\dots,I_s)$. For any set $F \subseteq [n]$ we write 
  $$e_F := \sum_{i \in F} e_i.$$
  If we define $F_j := \bigcup_{i=1}^j I_i$ for $j = 1,\dots,s-1$, then the tropical convex hull $\tconv\{x,y\}$ consists of the line segments connecting points $x = p_1,\dots,p_s$, where for $j > 1$ we set
  $$p_j := p_{j-1} + \left( \Delta(I_{j-1}) - \Delta(I_j) \right) e_{F_{j-1}} = x + \sum_{i=2}^j \left( \Delta(I_{i-1}) - \Delta(I_i) \right) e_{F_{i-1}}.$$
  In particular $p_s = \Delta(I_s) \odot y  \equiv y$ in $\tpn{n}$ and the slope of the line segment $[p_j,p_{j+1}]$ is $e_{F_j}$.
\end{remark}

\begin{ex}
 Let $n = 3$. We choose a representative of each element in $\tpn{3}$ by setting the first coordinate to be 0. Choose $x = (0,-1,-1), y = (0,2,1)$. Then $\Delta = (0,3,2)$, so we have $\type(\Delta) = ( \{2\},\{3\},\{1\})$. In particular, we get
 \begin{align*}
  p_1 &= x = (0,-1,-1),\\
  p_2 &= x + (3-2)e_{\{2\}} = x + (0,1,0) = (0,0,-1),\\
  p_3 &= (0,0,-1) + (2-0)e_{\{2,3\}} = (0,0,-1) + 2\cdot(0,1,1) = (0,2,1) = y.
 \end{align*}
 Note that there is a nice geometric way to construct this: Draw a \emph{min}-tropical line at each vertex. This induces a subdivision of the plane and the tropical convex hull then consists of the bounded cells of this subdivision. This also works for the tropical convex hull of an arbitrary (finite) number of vertices and in arbitrary dimensions, see \cite[Theorem 15]{dstropicalconvexity}.
\begin{figure}[ht]
 \centering
 \begin{tikzpicture}
  \draw[dotted] (-1,-1) -- (-2,-2);
  \draw[dotted] (-1,-1) -- (-1,2);
  \draw[dotted] (0,-1) -- (2,-1);
  \draw[dotted] (2,1) -- (2,2);
  \draw[dotted] (2,1) -- (3,1);
  \draw[dotted] (0,-1) -- (-1,-2);
  \draw (-1,-1) -- (0,-1) -- (2,1);
  \fill[black] (-1,-1) circle (2pt) node[left]{$p_1 = x = (-1,-1)$};
  \fill[black] (0,-1) circle (2pt) node[below right]{$p_2 = (0,-1)$};
  \fill[black] (2,1) circle (2pt) node[above right]{$p_3 = y = (2,1)$};
  
 \end{tikzpicture}
  \caption{Constructing the tropical convex hull of two points.}
\end{figure}
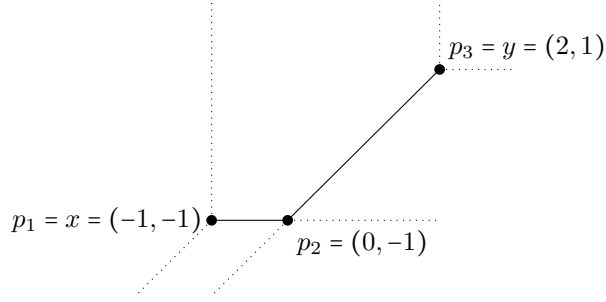

\end{ex}



\subsection{Tropical varieties}

We will only give the very basic definitions necessary for our purpose. For a general introduction to tropical geometry, see for example \cites{imstropicalgeometry,mrtropicalgeometry, MaclaganSturmfelsBook}.

\begin{defn}
 Let $X$ be a pure $d$-dimensional rational polyhedral complex in $\tpn{n}$. We will denote the \emph{support} of $X$ by $\abs{X} := \bigcup_{\sigma \in X} \sigma$.
 For a cell $\rho$ of $X$ we define $V_\rho := \gnrt{a-b; a,b \in \rho}$ to be the vector space associated to the affine space spanned by $\rho$ and we write $\Lambda_\rho := V_\rho \cap \Z^n/\textbf{1}$ for its lattice. 
 \begin{itemize}
  \item  Let $\sigma \in X^{(d)} := \{\sigma \in X; \dim(\sigma) = d\}$ and assume $\tau \subseteq \sigma$ is a face of dimension $d-1$. The \emph{primitive normal vector} of $\tau$ with respect to $\sigma$ is defined as follows: By definition there is a linear form $g$ such that its minimal locus on $\sigma$ is $\tau$. Then there is a unique generator of $\Lambda_\sigma/\Lambda_\tau \cong \Z$, denoted by $u_{\sigma/\tau}$, such that $g(u_{\sigma/\tau}) > 0$. 
 \item A \emph{tropical variety} $(X,\omega)$ is a pure, rational polyhedral complex $X$ together with a weight function $\omega: X^{\max} \to \N_{>0}$ (where $X^{\max}$ denotes the set of maximal polyhedral cells of $X$) fulfilling the \emph{balancing equation} at each codimension one cell $\tau$:
 $$\sum_{\sigma > \tau} \omega(\sigma) u_{\sigma/\tau} = 0 \textnormal{ mod } V_\tau.$$
 Note that we will consider two tropical varieties to be equivalent if they have a common refinement respecting the weight functions. A \emph{tropical fan} is a tropical variety whose polyhedral structure can be represented by a fan.
 \item For a polyhedral cell $\sigma$, we denote by 
 \begin{align*}
    \rec(\sigma) &:= \{v \in \tpn{n}: x + \R_{\geq 0}v \subseteq \sigma \textnormal{ for all } x \in \sigma\}\\
    &\,= \{v \in \tpn{n}: \exists x \in \sigma \textnormal{ such that } x + \R_{\geq 0}v \subseteq \sigma\}
 \end{align*}
its \emph{recession cone}. One can choose a refinement of a polyhedral complex $X$ such that $$\rec(X) := \{\rec(\sigma); \sigma \in X\}$$ is a fan and we will call that the \emph{recession fan} of $X$. If $(X,\omega)$ is a tropical variety, so is $(\rec(X),\omega_\rec)$, where
 $$\omega_\rec(\rho) = \sum_{\sigma: \rec(\sigma) = \rho} \omega(\sigma).$$
 (see \cite[p. 61]{rthesis} for a proof of this. It also follows implicitly from \cite[Theorem 5.4]{ahrrationalequivalence}.)
 \item A $d$-dimensional tropical variety $(X,\omega_X)$ is called \emph{irreducible}, if every $d$-dimensional variety $(Y,\omega_Y)$ with $\abs{Y} \subseteq \abs{X}$ is an integer multiple of $X$, i.e.\ $\abs{Y} = \abs{X}$ and (assuming we have chosen a common refinement) $\omega_Y = k \cdot \omega_X$ for some $k \in \N$. We also write this as $Y = k \cdot X$.
 \item Let $X$ be a tropical variety and $p \in \abs{X}$. By refining, we can assume without loss of generality that $p$ is a vertex of $X$. We define the \emph{Star} of $X$ at $p$ to be the fan 
 $$\Star_X(p) := \{ \R_{\geq 0} \cdot (\sigma - p); p \in \sigma\},$$
 with weight function $\omega_\Star(\R_{\geq 0} (\sigma - p)) = \omega_X(\sigma)$. It is easy to see that this is a balanced fan and that its support is equal to
 $$\{v \in \tpn{n}; \textnormal{There is } r_v > 0 \textnormal{ such that } p + \alpha v \in \abs{X} \textnormal{ for all } \alpha \in [0,r_v]\}.$$ 
 \end{itemize}
 
\end{defn}

\begin{figure}[ht]
\centering
\begin{tikzpicture}
 \matrix[column sep = 30pt]{
 \draw (0,0) -- (-1,0);
 \draw (0,0) -- (1,1);
 \draw (0,0) -- (0,-2);
 \draw (-1,-1) -- (1,-1);
 \draw (1,-1) -- (1,-2);
 \draw (1,-1) -- (2,0);
 \fill[black] (0,-1) circle (2pt) node[below left]{\tiny $p$};
 \draw (0,-.5) -- (0,-.5) node[left = 40]{\small $X$};
 &
 \draw (0,0.2) node[right=40] {\small $\rec(X)$} -- (-.6,0.2) node[above]{\tiny 2};
 \draw (0,0.2) -- (.6,.8) node[right]{\tiny 2};
 \draw (0,0.2) -- (0,-.4) node[left]{\tiny 2};
 \draw (0,-2) -- (0,-1);
 \draw (-.5,-1.5) -- (.5,-1.5);
 \draw (0,-1.5) -- (0,-1.5) node[right=40]{\small $\Star_X(p)$};
 \\
 };
\end{tikzpicture}
\caption{Forming the recession fan and the $\Star$ of $X$ at a point. Note that, while $\rec(X)$ is supported on a tropical linear space, it has nontrivial weights, so Theorem \ref{prelim_thm_valuated_matroid} does not apply}.
 
\end{figure}
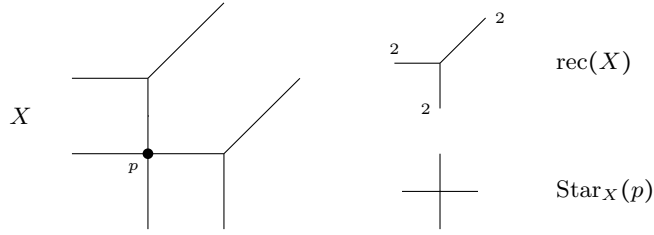

\subsection{Tropical linear spaces}

We will assume that the reader is familiar with the basic notions of matroid theory (see \cite{omatroidtheory} for a comprehensive study of the topic). For a study of tropical linear spaces see for example \cites{frstiefeltropical,risotropical,MaclaganSturmfelsBook,stropicallinear}. To quickly recap the matroid terminology we will mostly use: A \emph{circuit} is a minimal dependent set and a \emph{flat} is a closed set, i.e.\ adding any element increases the rank. Note that we will assume all matroids to be loopfree.

Dress and Wenzel \cite{DressWenzel} generalized the notion of a matroid to that of a \emph{valuated matroid}:

\begin{defn}
 A \emph{valuated matroid} $(M,w)$ is a matroid $M$ on a set $E = \{1,\dots,n\}$ together with a \emph{valuation} $w: \curly{B} \to \R$ on its set of bases $\curly{B}$ fulfilling the \emph{tropical Plücker relations}:
 
 For all $B_1,B_2 \in \curly{B}$ and every $u \in B_1$, there exists a $v \in B_2$, such that both $B_1 - u + v$ and $B_2 -v +u$ are bases and 
  $$(w(B_1) \odot w(B_2)) \oplus (w(B_1 - u  + v) \odot w( B_2 -v + u)) = w(B_1 - u  + v) \odot w( B_2 -v + u).$$
 \end{defn}
 
\begin{remark}
 As in classical matroid theory, there are various equivalent ways of defining a valuated matroid. Another way, discovered by Murota and Tamura \cite{mtcircuitvaluation} is via \emph{valuated circuits}. A circuit valuation is obtained by choosing a vector $v_C \in (\R \cup (-\infty)/\textbf{1})^n$ for each circuit $C$ of $M$ such that the following are fulfilled:
 \begin{itemize}
  \item $C = \{i \in [n]; (v_C)_i \neq -\infty\}$ for all circuits $C$.
  \item Let $C,C'$ be circuits and assume $i \in C \cap C', j \in C\wo C'$. Choose representatives of $v_C, v_{C'}$ such that $(v_C)_i = (v_{C'})_i$. Then there exists a circuit $D$ and a representative of $v_D$ such that $i \notin D$, $(v_D)_j = (v_C)_j$ and $v_D \oplus v_C \oplus v_{C'} = v_C \oplus v_{C'}$.
 \end{itemize}
The paper \cite{mtcircuitvaluation} shows that both axiom sets are cryptomorphic. More precisely, given a valuation $w: \curly{B} \to \R$ on the bases, a circuit valuation can be defined in the following way: Let $C$ be a circuit. Then $C$ is the fundamental circuit with respect to some basis $B$ and an element $i \notin B$. We set
$$(v_C^w)_j := w(B -j+i) - w(B),$$
where $w(B') = -\infty$, if $B'$ is not a basis. We will consider any valuated matroid $(M,w)$ to be equipped with this circuit valuation.
\end{remark}
 
 \begin{defn}
  One can define a polyhedral structure on the set
 $$B(M,w) :=\left\{x \in \tpn{n}; \max_{i \in C}\{x_i + (v_C^w)_i\} \textnormal{ is assumed at least twice $\forall$ circuits }C.\right\}.$$
 and assigning weight 1 to each maximal cell we obtain a tropical variety (a proof can be found in \cite[Theorem 4.4.5]{MaclaganSturmfelsBook}), which we also denote by $B(M,w)$. A \emph{tropical linear space} is a tropical variety of this form.
 \end{defn}
 
 \begin{remark}
 It turns out that we essentially only need to consider trivial valuations (i.e.\ $w \equiv 0$) to prove our theorem, in which case we obtain a polyhedral fan, the \emph{matroid fan} or \emph{Bergman fan} of a matroid $M$:
 $$B(M) := \{x \in \tpn{n}; \max_{i \in C} x_i \textnormal{ is assumed at least twice for all circuits } C \textnormal{ of } M\}.$$
 Note that for any valuation $w$ on a matroid $M$, $B(M) = \rec(B(M,w))$. The key fact that will allow us to reduce the problem to matroidal fans will be the following:
\end{remark}

\begin{theorem}\label{prelim_thm_valuated_matroid}
 Let $X$ be a tropical variety. Then the following are equivalent:
 \begin{itemize}
  \item $X = B(M,w)$ for a valuated matroid $(M,w)$.
  \item $\rec(X) = B(M)$.
 \end{itemize}
\end{theorem}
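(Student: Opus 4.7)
The forward direction is immediate from the observation recorded in the remark just preceding the statement: if $X = B(M,w)$, then the asymptotic directions of $B(M,w)$ depend only on the underlying matroid $M$, so $\rec(X) = B(M)$.

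For the converse direction, my plan is to invoke the degree-one characterization of tropical linear spaces from \cite{fchowpolytopes} recalled in the introduction: a $d$-dimensional tropical variety $Y \subseteq \tpn{n}$ is a tropical linear space if and only if its stable intersection with the tropical linear space of the uniform matroid of complementary rank $n-d$ is a single point with multiplicity one. The first step would be to show that this intersection number is invariant under passing to the recession fan. The intuition is that stable intersection with a fixed linear fan can be computed at infinity: translating the fan far into a generic direction, only the asymptotic cones of $X$ can meet it, so the intersection number equals that of $\rec(X)$ with the same fan. Making this precise requires either a direct limiting argument or an appeal to a projection formula from tropical intersection theory; it is also implicit in the Chow-polytope framework of \cite{fchowpolytopes}.

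Once this is in hand, the argument concludes quickly. Since $B(M) = \rec(X)$ is itself a tropical linear space (the one associated to the trivially valuated matroid $M$), it has degree one, and therefore so does $X$. The characterization then yields $X = B(M',w')$ for some valuated matroid $(M',w')$. Taking recession fans gives $B(M') = \rec(X) = B(M)$, and since the matroid fan of a loopfree matroid determines the matroid (its circuits can be read off from the facet structure at the origin, as recalled in the introduction), we conclude $M' = M$, completing the proof.

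The main obstacle I anticipate is the recession-invariance of degree in the first step. Although this is essentially folklore in tropical intersection theory, making it rigorous is likely to be the technically most delicate part of the argument, and how the author handles it (direct limit, projection formula, or a citation to rational equivalence between $X$ and $\rec(X)$ as in \cite{ahrrationalequivalence}) will determine the overall length and organization of the proof.
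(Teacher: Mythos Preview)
Your proposal is correct and follows essentially the same approach as the paper: the author simply states that the theorem follows from the degree-one characterization of tropical linear spaces \cite[Theorem 6.5]{fchowpolytopes} together with the fact that a tropical variety and its recession fan have the same degree (citing the argument in \cite[Theorem 4.4.5]{MaclaganSturmfelsBook}). Your additional step recovering $M' = M$ from $B(M') = B(M)$ is a worthwhile clarification that the paper leaves implicit.
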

This follows from the two facts that being a tropical linear space is equivalent to having degree one \cite[Theorem 6.5]{fchowpolytopes} and that a tropical variety and its recession fan have the same degree (see for example the argument in the proof of \cite[Theorem 4.4.5]{MaclaganSturmfelsBook}).

It has been shown that $B(M)$ has several possible representations as a polyhedral fan. We will be working with the structure induced by the flats of $M$ - this is the finest fan structure of $B(M)$ that usually occurs in the literature: 

\begin{defn}\label{matroid_flat_fan}
For a set $F \subseteq [n]$ we write $v_F := -e_F = -\sum_{i \in F} e_i$.

Let $\curly{F}$ be the set of flats of $M$. For any \emph{chain} $\curly{C} = (F_1,\dots,F_d = E)$, where $\emptyset \subsetneq F_1 \subsetneq \dots \subsetneq F_d = E$ and $F_i \in \curly{F}$ for all $i$, we define a polyhedral cone:
$$\cone(\curly{C}) := \{ \sum_{i=1}^{d-1} \lambda_i v_{F_i};\; \lambda_1,\dots,\lambda_{d-1} \geq 0\}.$$
If we go through all chains of flats in $M$, the corresponding cones obviously form a fan and by \cite{akbergman} the support of this fan is $B(M)$.
\end{defn}

\begin{remark}\label{prelim_remark_flats}
 One can also retrieve the matroid from its Bergman fan. It is a well-known fact that its set $\curly{F}$ of flats is $\{F \subseteq [n]; v_F \in B(M)\}$. To see this, assume $v_F \in B(M)$. By \cite{fsmatroid} this is the same as saying that the set of bases $B$ of $M$ such that $\abs{B \cap F}$ is maximal covers all of $E$, as these are the bases of minimal weight with respect to $v_F$. But that implies that $F$ is a flat: If $i \notin F$, there is a basis $B$ containing $i$ and having maximal intersection with $F$, so $\rank(F + i) = \abs{B \cap (F+i)} = \abs{B \cap F} + 1 = \rank(F) +1$.
\end{remark}

The following has been known for long, but we include the proof here for completeness:

\begin{prop}\label{prop_matroid_fan_is_convex}
 Let $(M,w)$ be a valuated matroid of rank $r$ on $n$ elements. Then $B(M,w)$ is tropically convex.
 \begin{proof}
  Let $x,y \in B((M,w))$ and $\lambda, \mu \in \R$. Let
  $$z := \lambda \odot x \oplus \mu \odot y = ( \max\{x_i + \lambda,y_i + \mu\})_{i=1,\dots,n}.$$
 Let $C \subseteq [n]$ be a circuit of $M$. We will denote the corresponding valuation by $v_C$. In that case $\max_{i \in C} \{x_i + (v_C)_i\}, \max_{i \in C} \{y_i + (v_C)_i\}$ are both assumed twice, i.e.\ there exist $i_1 \neq i_2,j_1 \neq j_2$ such that
 \begin{align*}
  x_C &:= \max_{i \in C} \{x_i+(v_C)_i\} = x_{i_1}  + (v_C)_{i_1} = x_{i_2}  + (v_C)_{i_2}\\
  y_C &:= \max_{i \in C} \{y_i+(v_C)_i\} = y_{j_1}  + (v_C)_{j_1} = y_{j_2} + (v_C)_{j_2}
 \end{align*}
We will assume without restriction that $y_C + \mu \geq x_C + \lambda$. Thus, $z_{j_1} + (v_C)_{j_1} = z_{j_2}  + (v_C)_{j_2} = y_C + \mu$. Let $k \in C$ be arbitrary. Then 
\begin{align*}
z_k + (v_C)_k &= \max\{x_k + (v_C)_k + \lambda,y_k + (v_C)_k + \mu\} \\
&\leq \max\{x_C + \lambda, y_C + \mu\} \\
&= y_C + \mu = z_{j_1} + (v_C)_{j_1}.
\end{align*}

In particular, the maximum $\max_{i \in C} \{z_i + (v_C)_i\}$ is assumed twice (at $j_1$ and $j_2$). 

 \end{proof}
\end{prop}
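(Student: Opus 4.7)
The plan is to verify tropical convexity directly from the circuit characterization of $B(M,w)$: a point $z \in \tpn{n}$ lies in $B(M,w)$ if and only if, for every circuit $C$ of $M$, the maximum $\max_{i \in C}\{z_i + (v_C^w)_i\}$ is attained at least twice. So I fix $x,y \in B(M,w)$, tropical scalars $\lambda,\mu \in \R$, set $z := (\lambda \odot x) \oplus (\mu \odot y)$, and try to check this condition for $z$ one circuit at a time.

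The main computation is simply to push the outer maximum past the inner one:
\[
\max_{i \in C}\{z_i + (v_C^w)_i\} = \max\Bigl\{\max_{i \in C}\{x_i + (v_C^w)_i\} + \lambda,\; \max_{i \in C}\{y_i + (v_C^w)_i\} + \mu\Bigr\} = \max\{x_C + \lambda,\, y_C + \mu\},
\]
where $x_C, y_C$ denote the two circuit maxima. By hypothesis each of $x_C$ and $y_C$ is attained at least twice. Without loss of generality assume the overall maximum is $y_C + \mu$; then every index $j$ with $y_j + (v_C^w)_j = y_C$ automatically satisfies $z_j + (v_C^w)_j = y_C + \mu$, because the competing term $x_j + (v_C^w)_j + \lambda$ is bounded above by $x_C + \lambda \leq y_C + \mu$. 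Hence the circuit maximum for $z$ is attained at least at the two witnesses provided by $y$.

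There is no serious obstacle here; the only point that needs care is to make sure one does not lose the \emph{double} attainment when tropically summing, and the WLOG reduction handles this cleanly by letting whichever of the two summands dominates the circuit donate its pair of witnesses. In the boundary case $x_C + \lambda = y_C + \mu$, the witnesses merely multiply rather than cancel, so the argument is if anything easier. Since $C$ was an arbitrary circuit, this shows $z \in B(M,w)$, which is what tropical convexity requires.
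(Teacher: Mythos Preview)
Your proof is correct and follows essentially the same approach as the paper: both fix a circuit $C$, compute $\max_{i\in C}\{z_i+(v_C^w)_i\}=\max\{x_C+\lambda,\,y_C+\mu\}$, assume without loss of generality that $y_C+\mu$ dominates, and then observe that the two witnesses for $y_C$ also witness the maximum for $z$. The only cosmetic difference is that you phrase the key step as ``pushing the outer maximum past the inner one,'' whereas the paper writes out the inequality $z_k+(v_C)_k\leq y_C+\mu$ for an arbitrary $k\in C$ explicitly.
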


\section{Tropically convex complexes}\label{section_tconv}

\begin{prop}\label{prelim_prop_recession}
 Let $X$ be a polyhedral complex and assume $\abs{X}$ is tropically convex. Then $\abs{\rec(X)}$ is tropically convex as well.
 \begin{proof}
  Assume $v,v' \in \abs{\rec(X)}$. We can reformulate this as the fact that there exist $p,p' \in \abs{X}$ such that $p + \R_{\geq 0}v, p' + \R_{\geq 0}v' \subseteq \abs{X}$. For $\alpha > 0$ we write $q_\alpha := p + \alpha v, q_\alpha' := p' + \alpha v'$ and $\Delta_\alpha :=  q_\alpha' - q_\alpha, \nu := v'-v.$ It is easy to see that we can choose $\alpha$ large enough such that $\type(\Delta_\alpha)$ remains constant and is a \emph{refinement} of $\type(\nu)$, by which we mean that if $\nu_i < \nu_j$, this implies $(\Delta_\alpha)_i < (\Delta_\alpha)_j$. Assume we have fixed such an $\alpha$ and that $\type(\Delta_\alpha) = (I_1,\dots,I_s)$.
  Now as in Remark \ref{prelim_remark_troplinesegment} we see that the tropical convex hull $\tconv\{q_\alpha,q_\alpha'\}$ consists of line segments connecting points
  $$p_j^\alpha := q_\alpha + \sum_{i=2}^j ( \Delta_\alpha(I_{i-1}) - \Delta_\alpha(I_i)) e_{F_{i-1}},$$
  where $F_i = \bigcup_{k \leq i} I_k.$ Now let $\beta > \alpha$. We calculate that for all $j = 1,\dots,s$ we have
  \begin{align*}
   p_j^\beta - p_j^\alpha &= (q_\beta - q_\alpha) + \sum_{i=2}^j \left((\Delta_\beta(I_{i-1}) - \Delta_\beta(I_i)) - (\Delta_\alpha(I_{i-1}) - \Delta_\alpha(I_i)) \right)e_{F_{i-1}}\\
   &= (\beta - \alpha)\underbrace{\left(v + \sum_{i=2}^j \left(\nu(I_{i-1}) - \nu(I_i)\right)e_{F_{i-1}} \right)}_{=: r_j}.
  \end{align*}
Note that $r_1,\dots,r_s$ are exactly the vertices of the line segments forming $\tconv\{v,v'\}$ (some of them may be the same, as $\type(\Delta_\alpha)$ can be strictly finer than $\type(\nu)$). In particular, since $p_j^\beta = p_j^\alpha + (\beta-\alpha)r_j \in \abs{X}$ for any $\beta > \alpha$, we see that $r_j$ lies in $\abs{\rec(X)}$. It is now easy to see that in fact the line segments in between must also lie in $\abs{\rec(X)}$.
 \end{proof}
\end{prop}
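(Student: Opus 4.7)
The plan is to start from two recession directions $v, v' \in \abs{\rec(X)}$ and produce the tropical combination $\lambda \odot v \oplus \mu \odot v'$ inside $\abs{\rec(X)}$ by transporting the problem to $\abs{X}$, where tropical convexity can be used. Concretely, pick $p, p' \in \abs{X}$ with $p + tv, p' + tv' \in \abs{X}$ for all $t \geq 0$; then the two entire rays sit in $\abs{X}$, and for every $t \geq 0$ the tropical segment $\tconv\{p + tv,\, p' + tv'\}$ is contained in $\abs{X}$ by assumption. The goal is to show that as $t \to \infty$ this segment, appropriately rescaled, recovers $\tconv\{v, v'\}$, so that the vertices and slopes of the latter feed back into $\abs{\rec(X)}$.

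The key tool is the combinatorial description of tropical segments from Remark \ref{prelim_remark_troplinesegment}. The segment from $q_t := p + tv$ to $q'_t := p' + tv'$ is a piecewise-linear path whose breakpoints and slopes are governed by the ordered partition $\type(\Delta_t)$, where $\Delta_t := q'_t - q_t = (p'-p) + t(v'-v)$. Writing $\nu := v'-v$, I would first observe that for $t$ sufficiently large $\type(\Delta_t)$ stabilizes to a refinement of $\type(\nu)$: coordinates with $\nu_i < \nu_j$ eventually satisfy $(\Delta_t)_i < (\Delta_t)_j$, while coordinates tied in $\nu$ can split according to the fixed differences from $p' - p$. Fixing this partition $(I_1,\dots,I_s)$, the explicit formula from the remark expresses each breakpoint $p_j^t$ of $\tconv\{q_t, q'_t\}$ as an affine function of $t$. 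Subtracting for two parameters $\beta > \alpha \gg 0$ then yields $p_j^\beta - p_j^\alpha = (\beta - \alpha)\, r_j$, where the vectors $r_j$ are, up to the refinement, exactly the breakpoints of $\tconv\{v, v'\}$. Since $p_j^\alpha$ and $p_j^\alpha + (\beta-\alpha) r_j$ both lie in $\abs{X}$ for every $\beta > \alpha$, each $r_j$ lies in $\abs{\rec(X)}$.

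To upgrade from the discrete breakpoints $r_j$ to the full tropical segment $\tconv\{v, v'\}$, I would run the same argument along each edge: the slope $e_{F_{i-1}}$ of the edge $[p_{j-1}^t, p_j^t]$ is independent of $t$ and coincides (after collecting terms over blocks of the refinement) with the slope of the corresponding edge of $\tconv\{v,v'\}$. Since an entire parametric family of translates of this edge sits in $\abs{X}$, the slope direction, and hence the whole edge, lies in $\abs{\rec(X)}$. Finally, taking $\lambda = \mu = 0$ covers the displayed tropical combinations after observing that $\abs{\rec(X)}$ is closed under positive scaling, which absorbs arbitrary $\lambda, \mu$.

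The main obstacle I expect is the bookkeeping around the refinement: $\type(\Delta_t)$ may be strictly finer than $\type(\nu)$, so some consecutive $r_j$'s collapse and edges of $\tconv\{q_t, q'_t\}$ correspond not to single edges of $\tconv\{v, v'\}$ but to telescoping sums of them. Verifying that these sums really reproduce the slope vectors $e_{F_j}$ predicted by $\type(\nu)$, and not some spurious directions, is the delicate step; once this is done, tropical convexity of $\abs{\rec(X)}$ follows with essentially no additional work.
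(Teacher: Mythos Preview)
Your proposal is correct and follows essentially the same route as the paper's proof: choose base points $p,p'$, let the parameter $t$ (the paper's $\alpha$) grow until $\type(\Delta_t)$ stabilizes to a refinement of $\type(\nu)$, compute $p_j^\beta - p_j^\alpha = (\beta-\alpha)r_j$ to identify the $r_j$ with the vertices of $\tconv\{v,v'\}$, and then fill in the edges. Your discussion of the refinement bookkeeping and the edge-filling step is exactly the content the paper compresses into the final ``it is now easy to see'' sentence.
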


\begin{lemma}\label{lemma_reconetoone}
 Let $X$ be a $d$-dimensional polyhedral complex such that $\abs{X}$ is tropically convex. Assume that $\rec(X) = \{\rec(\sigma); \sigma \in X\}$ is a fan. Then for each maximal cone $\rho$ of $\rec(X)$, there is exactly one maximal cell $\sigma$ of $X$ such that $\rho = \rec(\sigma)$.
 \begin{proof}
Assume there are two maximal cells $\sigma, \sigma'$ of $X$ such that $\rho = \rec(\sigma) = \rec(\sigma')$. Pick any two points $p \in \sigma, p' \in \sigma$. For $r \in \rho$ we write $q_r = p + r, q_r' = p' + r$. As taking the tropical convex hull commutes with translations, we see that
$$\tconv\{q_r,q_r'\} = \tconv\{p,p'\} + r.$$
This implies that $\tconv\{p,p'\} + \rho \subseteq \abs{X}.$ But as $p-p' \notin V_\sigma = V_\rho$, there must be a line segment $l \subseteq \tconv\{p,p'\}$, whose slope does not lie in $V_\sigma$. Hence $l + \rho$ is a $(d+1)$-dimensional set contained in $\abs{X}$, which is a contradiction to our assumption.
 \end{proof}
\end{lemma}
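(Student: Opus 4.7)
The plan is to proceed by contradiction: assume two distinct maximal cells $\sigma \ne \sigma'$ of $X$ both have $\rec(\sigma) = \rec(\sigma') = \rho$. Since $\rho$ is a maximal cone of the fan $\rec(X)$ and $\sigma, \sigma'$ are $d$-dimensional, one first checks that $\dim \rho = d$, so that $V_\sigma = V_{\sigma'} = V_\rho$; thus $\sigma$ and $\sigma'$ lie in parallel translates of the same linear subspace $V_\rho$. Distinct maximal cells of a polyhedral complex meet only in a common proper face, so these translates must in fact be different. Pick interior points $p \in \sigma$ and $p' \in \sigma'$; then $p - p' \notin V_\rho$.

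The key mechanism is that tropical convex combinations commute with ordinary translation: directly from the definition, $\tconv\{p+r,\,p'+r\} = \tconv\{p,p'\} + r$ for any $r \in \R^n$. Applied with $r$ ranging over $\rho$, and using the defining property $p + \rho \subseteq \sigma$, $p' + \rho \subseteq \sigma'$ together with the tropical convexity of $\abs{X}$, this sweeps out the inclusion $\tconv\{p,p'\} + \rho \subseteq \abs{X}$.

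Finally, I would invoke Remark \ref{prelim_remark_troplinesegment} to describe $\tconv\{p,p'\}$ as a polygonal path whose consecutive edge slopes $e_{F_1},\dots,e_{F_{s-1}}$ combine with strictly positive coefficients (modulo $\mathbf{1}$) to give $p' - p$. Because $p - p' \notin V_\rho$ and $V_\rho$ is a linear subspace, at least one slope $e_{F_j}$ cannot lie in $V_\rho$; the corresponding line segment, together with $\rho$, produces a subset of $\abs{X}$ of dimension $1 + \dim \rho = d+1$, contradicting that $X$ is pure of dimension $d$.

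The translation invariance of $\tconv$ is trivial from the definitions and the polygonal-path description of $\tconv\{p,p'\}$ is handed to us by Remark \ref{prelim_remark_troplinesegment}, so the only delicate part is the dimension bookkeeping: verifying that $\rho$ being a maximal cone of $\rec(X)$ does force $\dim \rho = d$ (hence $V_\sigma = V_\rho$), and that two maximal $d$-cells sharing that full-dimensional recession cone cannot coexist inside a single affine translate of $V_\rho$ without violating the common-face condition of a polyhedral complex. Once this setup is pinned down, the contradiction falls out of the slope argument.
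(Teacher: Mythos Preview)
Your argument is exactly the paper's: assume two cells, use translation-invariance of $\tconv$ to get $\tconv\{p,p'\}+\rho\subseteq\abs{X}$, then exhibit a segment of $\tconv\{p,p'\}$ whose slope lies outside $V_\rho$ to produce a $(d{+}1)$-dimensional subset. You are in fact more careful than the paper about justifying $p-p'\notin V_\rho$; the point you flag as delicate---that $\dim\rho=d$---is indeed not forced by the lemma's stated hypotheses alone (a subdivided bounded polytrope gives a counterexample), but the paper's proof tacitly assumes it as well via the equality $V_\sigma=V_\rho$, and in the only place the lemma is invoked $\rec(X)$ is already known to be a pure $d$-dimensional matroidal fan.
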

\begin{prop}\label{prop_locally_convex}
 Let $X$ be a tropically convex polyhedral complex and $p \in \abs{X}$. Then $\Star_X(p)$ is tropically convex as well.
 \begin{proof}
  As tropical convexity is preserved under translation, we can assume $p = 0$. Let $v,v' \in \Star_X(0)$. It is clear that for any $\alpha > 0, \type( \alpha(v' - v))$ does not change. Hence $\tconv\{\alpha v, \alpha v'\} = \alpha \tconv\{v,v'\}$. As $v,v' \in \Star_X(p)$, the left hand side is contained in $\abs{X}$ for all $\alpha \leq 1$. This implies $\tconv\{v,v'\} \subseteq \abs{\Star_X(p)}$.
 \end{proof}
\end{prop}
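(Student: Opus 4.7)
The plan is to reduce to the case $p = 0$ by translation invariance of tropical convexity, then exploit the fact that the tropical convex hull of two points scales linearly under positive dilation.

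First, I would translate so that $p = 0$. Pick two arbitrary elements $v, v' \in \Star_X(0)$, and the goal becomes to show that every tropical linear combination (equivalently, every point of $\tconv\{v, v'\}$) lies in $\Star_X(0)$. Using the explicit description of the tropical segment from Remark \ref{prelim_remark_troplinesegment}, I would observe that the partition $\type(\Delta)$ of $\Delta = v' - v$ is unchanged when we rescale both endpoints by a common positive scalar $\alpha$: the ordering of the coordinates of $\alpha(v' - v)$ is the same as that of $v' - v$. Since the description of the break points $p_j$ depends only on this partition data and is itself linear in the differences $\Delta(I_{i-1}) - \Delta(I_i)$, one gets the clean scaling identity
\[
\tconv\{\alpha v, \alpha v'\} \;=\; \alpha\cdot \tconv\{v, v'\}
\]
for every $\alpha > 0$.

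Next, I would use the definition of $\Star_X(0)$: there exists $r > 0$ such that $\alpha v, \alpha v' \in \abs{X}$ for all $\alpha \in [0, r]$ (after possibly shrinking to a common radius). By tropical convexity of $\abs{X}$, the whole segment $\tconv\{\alpha v, \alpha v'\}$ lies in $\abs{X}$ for each such $\alpha$. Combining this with the scaling identity gives $\alpha \cdot \tconv\{v, v'\} \subseteq \abs{X}$ for every $\alpha \in [0, r]$. For any $w \in \tconv\{v, v'\}$, this shows $\alpha w \in \abs{X}$ for $\alpha \in [0, r]$, which is exactly the condition for $w \in \abs{\Star_X(0)}$.

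The one subtlety worth double-checking is the scaling identity $\tconv\{\alpha v, \alpha v'\} = \alpha \tconv\{v, v'\}$; this is where the argument really happens, since without it the two endpoints and the midpoints of the tropical segment could live at very different scales relative to a fixed neighborhood of $0$. Once this is in hand, the rest is essentially unpacking the definition of $\Star_X(p)$, and nothing matroid-theoretic or balancing-related is needed — the statement is purely about tropical convexity at the level of supports.
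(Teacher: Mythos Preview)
Your proposal is correct and follows essentially the same argument as the paper: translate to $p=0$, use the invariance of $\type(v'-v)$ under positive scaling to get $\tconv\{\alpha v,\alpha v'\}=\alpha\,\tconv\{v,v'\}$, and then feed this into the definition of $\Star_X(0)$. If anything, you are slightly more careful than the paper in explicitly passing to a common radius $r$ rather than writing ``for all $\alpha\leq 1$''.
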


\begin{defn}
 Let $X$ be a polyhedral fan in $\tpn{n}$. We write
$$\curly{F}_X := \{ F \subseteq [n]: v_F \in \abs{X}\}.$$
For any chain $\curly{C} = (F_1,\dots,F_d = E)$ in $\curly{F}_X$, we will define $\cone(\curly{C})$ as in Definition \ref{matroid_flat_fan}. Note that each such cone is unimodular and has dimension $d -1$.

We then obtain a polyhedral fan, the \emph{chain fan} of $X$:
$$\textnormal{Ch}_X:= \{ \cone(\curly{C});\; \curly{C} \textnormal{ a chain in } \curly{F}_X\}.$$
In general this fan obviously need not be pure and can be empty. 
\end{defn}

\begin{remark}
A special case is  $X = \tpn{n}$. Then $\textnormal{Ch}_{X} =: \textnormal{Ch}_n$ is a subdivision of $\tpn{n}$ according to partitions, i.e.\ two elements lie in the interior of the same cone, if and only if they have the same partition. In particular, if $x \in \tpn{n}$, then the minimal cone of $\textnormal{Ch}_n$ containing $x$ is $d$-dimensional if and only if $\size(x) = d+1$. Note that $\textnormal{Ch}_n$ can also be defined as the quotient of the normal fan of the permutohedron or as the chains-of-flats subdivision corresponding to the uniform matroid $U_{n,n}$.
\end{remark}

\begin{lemma}\label{lemma_cones_in_fan}
 Let $X$ be a tropically convex polyhedral fan in $\tpn{n}$. Then the following hold:
\begin{enumerate}
 \item Assume $\dim(X) = d$ and $x \in \abs{X}$. Then $\size(x) \leq d+1$. In particular, $X$  is contained in the $d$-dimensional skeleton of $\textnormal{Ch}_n$.
 \item $\curly{F}_X$ is closed under intersections, i.e.\ if $F,F' \in \curly{F}_X$, then $F \cap F' \in  \curly{F}_X$.
 \item If $\curly{C}$ is any chain in $\curly{F}_X$, then $\cone(\curly{C}) \subseteq \abs{X}$.
\end{enumerate}
\begin{proof}\newl
\begin{enumerate}
 \item Fix a representative of $x$ and assume that $s := \size(x) > d+1$. Let $\type(x) = I_1 \cup \dots \cup I_s$ as in Remark \ref{prelim_remark_troplinesegment} and $F_j := \bigcup_{i \leq j} I_i$. The tropical line segment $\tconv\{0,x\}$ consists of segments connecting the points $0 = p_1,\dots,p_s = x$, where
 $$p_j = \sum_{i=2}^j (x(I_{i-1}) - x(I_i)) e_{F_{i-1}}.$$
 We will show inductively that for $k = 2,\dots,s-1$ there are continuous, concave, piecewise affine linear functions $\epsilon_k: \R^{k-1} \to \R$, such that $\epsilon_k$ is strictly positive on $(\R_{>0})^{k-1}$ and such that for $k \geq 1$ we have 
 $$P_k = \left\{\sum_{i=1}^{k} \lambda_i e_{I_i}; \lambda_i \in [0,\epsilon_i(\lambda_1,\dots,\lambda_{i-1})] \textnormal{ for all } i > 1, \lambda_1 \geq 0\right\} \subseteq \abs{X}.$$
 Then $P_{s-1}$ is a polyhedron of dimension $s-1 > d$ in the fan $X$, which is a contradiction to our assumption that $\dim X = d$.
 
 For $k = 1$ we get $P_1 = \{\lambda_1 e_{I_1};\lambda_1 \geq 0\}$. As $p_2 = (x(I_1) - x(I_2)) e_{I_1} \in X$ and $X$ is a fan, this is always contained in $X$. Now assume $k > 1$ and that we have found $\epsilon_2,\dots,\epsilon_{k-1}$. Let $q = \sum_{i=1}^{k-1} \lambda_i e_{I_i} \in P_{k-1}$ and assume $0 < \lambda_i$ for all $i$. In particular, $q(I_i) > 0$ for all $i < k$ and $q(I_i) = 0$ for $i \geq k$. We now consider the tropical line segment $\tconv\{\alpha q,p_{k+1}\}$, where $\alpha > 0$. By induction $q \in \abs{X}$ and since $X$ is a fan, so is $\alpha q$. We conclude that $\tconv\{\alpha q,p_{k+1}\} \subseteq \abs{X}$.
 Let $\Delta = p_{k+1} - \alpha q$. Note that $\Delta$ has constant entries on each $I_j$ (as both $q$ and $p_{k+1}$ do). More precisely, we have
 $$\Delta(I_j) = \begin{cases}
    0, &\textnormal{if } j > k\\
    x(I_k) - x(I_{k+1}), &\textnormal{if } j = k\\
    x(I_j) - x(I_{k+1}) - \alpha \lambda_j, &\textnormal{if } j < k.
   \end{cases}
$$
 
 Now, if we pick $\alpha$ sufficiently large, $\Delta$ is maximal on entries in $I_k$: For $j \neq k$ we have
$$
  \Delta(I_k) - \Delta(I_j) = 
  \begin{cases}
      x(I_k) - x(I_{k+1}) > 0, &\textnormal{if } j > k\\
      \underbrace{x(I_k) - x(I_j)}_{ <0} + \alpha \underbrace{\lambda_j}_{>0}, &\textnormal{if } j < k
     \end{cases}
 $$
and the latter is always greater than zero if we pick 
$$\alpha > \max_{j < k} \left\{ \frac{x(I_1) - x(I_{k+1})}{\lambda_j}\right\} =: m_{k,\lambda}.$$
In this case the first line segment of $\tconv\{\alpha q,p_{k+1}\}$, going from $\alpha q$ to $p_{k+1}$, has slope $e_{I_k}$. Its length is $d_k := x(I_k) - x(I_{k+1})$: Our choice of $\alpha$ implies that $\Delta(I_j) < 0$ for all $j < k$.

In particular, $\{\alpha q + \lambda e_{I_k}; \lambda \in [0,d_k]\} \subseteq \abs{X}$. But that implies that $q + \lambda e_{I_k} \in \abs{X}$, whenever 
$$0 \leq \lambda \leq d_k \cdot \frac{1}{m_{k,\lambda}} = d_k \cdot \min_{j<k}\left\{ \lambda_j \cdot \frac{1}{x(I_1) - x(I_{k+1})}\right\} =: \epsilon_k(\lambda_1,\dots,\lambda_{k-1}).$$
As $\epsilon_k$ clearly fulfills all desired properties, the claim follows.
 \item Follows directly from the fact that $v_F \oplus v_{F'} = v_{F \cap F'}$.
 \item Let $\curly{C} = (F_1,\dots,F_l = E)$ be a chain in $\curly{F}_X$. An element of $\cone(\curly{C})$ is of the form $w = \sum_{i=1}^{l-1} \lambda_i v_{F_i}$ for some $\lambda_i \geq 0$. We define $\mu_j = \sum_{i=j+1}^{l-1} \lambda_j$ for $j = 0,\dots,l-1$ and claim that
 $$w = \bigoplus_{j=1}^{l-1} (- \mu_j) \odot (\mu_0 \cdot v_{F_j}),$$
 which by assumption is an element of $\abs{X}$. Indeed, let $k \in \{1,\dots,n\}$ and note that $\mu_0 \geq \mu_1 \geq \dots \geq \mu_{l-1} = 0$. Then
 \begin{align*}
  \left( \bigoplus_{j=1}^{l-1} (- \mu_j) \odot (\mu_0 \cdot v_{F_j})\right)_k &= \max_{j=1,\dots,l-1} \left( \mu_0 \cdot (v_{F_j})_k - \mu_j\right)\\
  &= \begin{cases}
      - \mu_0,&\textnormal{if } k \in F_1\\
      - \mu_{j(k)},\textnormal{where } j(k) :=  \max\{j: k \notin F_j\},&\textnormal{otherwise}
     \end{cases}
%
\\
  &= - \sum_{i: k \in F_i}\lambda_i\\
  &= w_k.
 \end{align*}

\end{enumerate}
\end{proof}
\end{lemma}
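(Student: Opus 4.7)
My plan is to handle the three parts separately, with (1) being by far the most technical. For (2), I will use that $(v_F \oplus v_{F'})_i = \max(-[i\in F], -[i\in F'])$ equals $-1$ exactly when $i \in F \cap F'$, so $v_F \oplus v_{F'} = v_{F\cap F'}$; since $v_F, v_{F'} \in |X|$ and $X$ is tropically convex, $v_{F\cap F'} \in |X|$ too. For (3), the idea is to realise an arbitrary point $w = \sum_{i=1}^{l-1} \lambda_i v_{F_i}$ of $\cone(\curly{C})$ as an explicit tropical linear combination of the $v_{F_j}$'s. Setting the partial sums $\mu_j := \sum_{i>j} \lambda_i$ (so $\mu_0 \geq \mu_1 \geq \cdots \geq \mu_{l-1}=0$), I would check coordinatewise that $w = \bigoplus_{j=1}^{l-1} (-\mu_j) \odot (\mu_0 \cdot v_{F_j})$: for index $k$, since $F_1 \subseteq \cdots \subseteq F_{l-1}$, the maximum on the right is attained at the largest $j$ with $k \notin F_j$ (or at $j=1$ if $k \in F_1$), and a direct computation gives $w_k = -\sum_{i: k\in F_i} \lambda_i$. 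Since $X$ is a fan containing each $v_{F_j}$, tropical convexity finishes (3).

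The heart of the lemma is (1). My approach is the following: given $x \in |X|$ with $\size(x) = s$, I want to construct an $(s-1)$-dimensional subset of $|X|$, contradicting $\dim X = d$ if $s > d+1$. Using Remark \ref{prelim_remark_troplinesegment}, the tropical segment $\tconv\{0,x\}$ passes through $0 = p_1, p_2, \ldots, p_s = x$ with segment slopes $e_{F_1}, \ldots, e_{F_{s-1}}$, where $F_i = I_1 \cup \cdots \cup I_i$ refines the partition $\type(x) = (I_1, \ldots, I_s)$. These $s-1$ directions are linearly independent, so if I can show that small neighbourhoods in all of them are simultaneously available inside $|X|$, I get an $(s-1)$-dimensional polytope in $|X|$.

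I would set this up inductively, building regions $P_k$ parametrised by non-negative coefficients $\lambda_1, \ldots, \lambda_k$ of $e_{I_1}, \ldots, e_{I_k}$, with each $\lambda_i$ running from $0$ up to a strictly positive concave bound $\epsilon_i(\lambda_1, \ldots, \lambda_{i-1})$, and each $P_k \subseteq |X|$. Base case $k=1$ is free: $p_2$ is a positive multiple of $e_{I_1}$ and $X$ is a fan, so the whole ray $\R_{\geq 0} e_{I_1}$ lies in $|X|$. For the inductive step, given $q \in P_{k-1}$ with all $\lambda_i > 0$, I scale $q$ by a large $\alpha$ (still in $|X|$ since $X$ is a fan), form the tropical segment $\tconv\{\alpha q, p_{k+1}\}$, and compute $\Delta := p_{k+1} - \alpha q$; the point is to choose $\alpha$ large enough (explicitly, larger than $\max_{j<k} (x(I_1) - x(I_{k+1}))/\lambda_j$) so that $\Delta$ is maximised exactly on $I_k$. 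Then the first segment emanating from $\alpha q$ has slope $e_{I_k}$ and length $d_k = x(I_k) - x(I_{k+1})$, giving $\alpha q + [0,d_k] e_{I_k} \subseteq |X|$. Rescaling back by $1/\alpha$ (using that $X$ is a fan) produces $q + [0, d_k/\alpha] e_{I_k} \subseteq |X|$, and $d_k/\alpha$ depends concavely and positively on the $\lambda_j$, giving $\epsilon_k$.

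The main obstacle is precisely this inductive construction: one must carefully verify that a large enough $\alpha$ forces the first tropical-segment slope from $\alpha q$ to land on $e_{I_k}$, and that the resulting bound $\epsilon_k$ is concave and strictly positive on the positive orthant so that $P_k$ is genuinely $k$-dimensional. Once $P_{s-1} \subseteq |X|$ is established, it is an $(s-1)$-dimensional polytope inside the $d$-dimensional fan $X$, forcing $s-1 \leq d$. The final statement about $X$ being contained in the $d$-skeleton of $\textnormal{Ch}_n$ is immediate from the remark that the minimal cone of $\textnormal{Ch}_n$ containing $x$ has dimension $\size(x) - 1$.
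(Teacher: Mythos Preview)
Your proposal is correct and follows essentially the same route as the paper's proof in all three parts: the identity $v_F \oplus v_{F'} = v_{F\cap F'}$ for (2), the explicit tropical linear combination with partial sums $\mu_j$ for (3), and for (1) the same inductive construction of the polytopes $P_k$ via scaling $q$ by a large $\alpha$, analysing $\Delta = p_{k+1} - \alpha q$, and rescaling back. The only minor imprecision is that you write $\epsilon_k = d_k/\alpha$; in the paper $\epsilon_k$ is taken to be the supremal value $d_k/m_{k,\lambda}$ (a minimum of linear functions in the $\lambda_j$, hence concave), but this is exactly what your description converges to and does not affect the argument.
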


\section{Tropical convexity and valuated matroids}\label{section_proof}

By Proposition \ref{prop_matroid_fan_is_convex} we only need to prove that any tropically convex tropical variety is supported on a tropical linear space. In fact, it will suffice to reduce to the case of fans:

\begin{prop}\label{proof_prop_fan}
 Let $X$ be a tropical variety whose support is a tropically convex fan. Then $\abs{X} = B(M)$ for a matroid $M$.
\end{prop}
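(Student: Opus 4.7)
The plan is to identify $|X|$ with the Bergman fan $B(M)$ of a matroid $M$ on $[n]$ by reading off the flats of $M$ from the fan structure of $X$. Specifically, set
$$\mathcal{F} := \mathcal{F}_X = \{F \subseteq [n] : v_F \in |X|\},$$
and $\mathcal{L} := \mathcal{F} \cup \{\emptyset\}$. Lemma~\ref{lemma_cones_in_fan}(2) already gives closure of $\mathcal{F}$ under intersection, and $[n] \in \mathcal{F}$ because $v_{[n]} \equiv 0 \in |X|$.

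First I would identify every cone of $X$ with the cone of a chain in $\mathcal{F}$. By Lemma~\ref{lemma_cones_in_fan}(1), $|X|$ is contained in the $d$-skeleton of $\textnormal{Ch}_n$, so each ray of $X$ lies in the $1$-skeleton and hence is of the form $\R_{\geq 0} v_F$ for some proper nonempty $F \in \mathcal{F}$. A $d$-cone $\sigma$ of $X$ is then a $d$-dimensional cone inside the $d$-skeleton of $\textnormal{Ch}_n$; its relative interior lies in the relative interior of a single $d$-cone of $\textnormal{Ch}_n$, and a dimension count forces equality. Hence each maximal cone of $X$ is $\cone(F_1 \subsetneq \cdots \subsetneq F_d \subsetneq [n])$ for a chain in $\mathcal{F}$, and by Lemma~\ref{lemma_cones_in_fan}(3) the converse holds, giving
$$|X| = \bigcup_{\mathcal{C}} \cone(\mathcal{C}),$$
with the union over maximal chains $\mathcal{C}$ in $\mathcal{F}$. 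Purity of $X$ moreover forces $\mathcal{L}$ to be graded of rank $d+1$: a shorter maximal chain in $\mathcal{L}$ would yield a cone of dimension less than $d$ whose rays are a proper subset of those of some $d$-cone of $X$, and that cone's chain would then strictly refine the original, contradicting maximality.

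Next I would verify that $\mathcal{L}$ is the lattice of flats of a matroid $M$ of rank $d+1$. Intersection closure, top $[n]$, bottom $\emptyset$, and purity are in hand; the remaining axiom is the cover-partition condition: for every $F \in \mathcal{L}$ and every $e \in [n] \setminus F$ there is a unique cover of $F$ in $\mathcal{L}$ containing $e$. Uniqueness is immediate from intersection closure (two such covers intersect strictly above $F$ and then agree by coverness). For existence, let $\tilde F := \bigcap\{F'' \in \mathcal{L} : F \cup \{e\} \subseteq F''\}$; one must rule out any intermediate $F' \in \mathcal{L}$ with $F \subsetneq F' \subsetneq \tilde F$, noting that such an $F'$ necessarily omits $e$ by minimality of $\tilde F$.

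The main obstacle is this existence step. Purity alone does not forbid intermediate flats, so one must inject additional input --- either by reducing to a lower-dimensional problem via Proposition~\ref{prop_locally_convex} applied to $\Star_X(v_F)$ and inducting on $d$, or by exploiting the balancing of $X$ at a codimension-$1$ cell through $v_F$ to force the rank of $\tilde F$ in $\mathcal{L}$ to exceed that of $F$ by exactly one. Once the cover-partition axiom is verified, $\mathcal{L}$ is recognized as the lattice of flats of a matroid $M$, and the chain-of-flats description of the Bergman fan gives $B(M) = |X|$ immediately.
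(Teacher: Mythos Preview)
Your overall plan matches the paper's: identify $|X|$ with $|\textnormal{Ch}_X|$, then show that $\mathcal{F}_X$ satisfies the flat axioms, with the cover-partition condition as the crux. But there are two genuine gaps in the execution.

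First, your argument that maximal cones of $X$ coincide with chain cones is not valid. Containment of $|X|$ in the $d$-skeleton of $\textnormal{Ch}_n$ does \emph{not} force rays of $X$ into the $1$-skeleton, and the sentence \enquote{a dimension count forces equality} is simply false: a $d$-dimensional cone $\sigma$ of $X$ can sit strictly inside a $d$-cone $\rho$ of $\textnormal{Ch}_n$ even after refining. The paper closes this gap in Proposition~\ref{prop_chain_cone_support} by using balancing: refine $X$ by $\textnormal{Ch}_n$; if some maximal $\sigma$ were a proper subcone of its ambient $\rho \in \textnormal{Ch}_n$, then some codimension-one face $\tau$ of $X$ would lie in the relative interior of $\rho$ with only one maximal neighbour inside $\rho$, while all other maximal neighbours lie in other $d$-cones of $\textnormal{Ch}_n$, which meet $\rho$ only along its boundary --- contradicting the balancing condition at $\tau$. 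This is the first essential use of the balancing hypothesis, and your \enquote{dimension count} cannot replace it.

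Second, you correctly isolate the cover-partition axiom as the main obstacle but stop short of proving it; neither of your two suggested routes is carried out. The paper takes your second route and makes it precise as follows. One first uses purity of $\textnormal{Ch}_X$ to define $\rank(G)$ unambiguously. Given $F$ and $j \in [n]\setminus F$, one argues by downward induction on $\rank(F)$: by induction there is an $F'$ of rank $\rank(F)+2$ with $F \subsetneq F'$ and $j \in F'$. Complete $(F,F')$ to a chain $\mathcal{D}$ of length $d$, so $\cone(\mathcal{D})$ has codimension one. The maximal cones adjacent to $\cone(\mathcal{D})$ are indexed exactly by those $G \in \mathcal{F}_X$ with $F \subsetneq G \subsetneq F'$, and the balancing equation reads $\sum_G w_G v_G \in V_{\cone(\mathcal{D})}$. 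Since membership in $V_{\cone(\mathcal{D})}$ forces all coordinates indexed by $F'\setminus F$ to agree, and since the sets $G\setminus F$ are pairwise disjoint with $w_G \neq 0$, every element of $F'\setminus F$ --- in particular $j$ --- must lie in some such $G$. These $G$ are among the covers $F_1,\dots,F_k$ of $F$, which is exactly the existence you needed.
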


Our main theorem now follows from this:

\begin{proof}(of Theorem \ref{main_theorem})
 The \enquote{if} direction follows from Proposition \ref{prop_matroid_fan_is_convex}. For the \enquote{only if} direction, let $X$ be a tropical variety with tropically convex support. By Propositions \ref{prop_locally_convex} and \ref{proof_prop_fan}, $\Star_X(p)$ is supported on a matroidal fan $B(M(p))$ at each $p \in \abs{X}$. Since any matroidal fan is irreducible \cite[Lemma 2.4]{frdiagonalintersection}, we must have $\Star_X(p) = k_p \cdot B(M(p))$ for some $k_p \in \N$. 
 As $\abs{X}$ is tropically convex, it is also path-connected. This implies that $k_p = k_{p'} =:k$ for all $p,p' \in \abs{X}$. We conclude that $X = k \cdot Y$ for some $k \in \N$ and a tropical variety $Y$ with constant weight 1.
 
 By Propositions \ref{prelim_prop_recession} and \ref{proof_prop_fan}, $\rec(X)$ is also supported on a matroidal fan and hence of the form $l \cdot B(M)$ for some $l \in \N$ and some matroid $M$. Using Lemma \ref{lemma_reconetoone}, we see that $\rec(Y) = B(M)$ (and in fact: $k = l$). By Theorem \ref{prelim_thm_valuated_matroid} we must have $Y = B(M,w)$ for some valuation $w$ on $M$, so $\abs{X} = \abs{Y} = \abs{B(M,w)}$, as claimed.
 
\end{proof}

The general idea for proving Proposition \ref{proof_prop_fan} is to revert the procedure described in Remark \ref{prelim_remark_flats}: We define $M$ via its flats, which is the set $\curly{F}_X$ of all sets whose incidence vectors lie in $\abs{X}$. We show that $X$ is supported on the fan of chains of $\curly{F}_X$. Then it only remains to show that $\curly{F}_X$ actually fulfills the axioms required for a set of flats. Naturally, the balancing condition plays a crucial role in the proofs of both statements.

\begin{prop}\label{prop_chain_cone_support}
 Let $X$ be a tropical fan and assume $\abs{X}$ is tropically convex. Then 
 $$\abs{X} = \abs{\textnormal{Ch}_X}.$$
\begin{proof}
 Let $d := \dim X$ and let $\curly{X}$ be a polyhedral structure of $X$. By Lemma \ref{lemma_cones_in_fan},(1), $\abs{X}$ is contained in the $d$-dimensional skeleton of $\textnormal{Ch}_n$. By intersecting $X$ with $\textnormal{Ch}_n$, we can now assume that each cone of $\curly{X}$ is contained in some $d$-dimensional $\cone(\curly{C})$, where $\curly{C}$ is a chain of arbitrary subsets of $[n]$.
 
The balancing condition of $X$ now dictates that if $\rho = \cone(\curly{C}) \in \textnormal{Ch}_n$ contains a maximal cone of $\curly{X}$ in its interior, all of $\rho$ must be in $\abs{X}$:
 Otherwise, it would contain a codimension one face $\tau$ of $\curly{X}$ in its interior, such that there is one maximal cone $\sigma > \tau$ with $\sigma \subseteq \rho$ but no other maximal cone $\sigma' > \tau$ is contained in $\rho$. Balancing implies that at least one other maximal cone $\sigma'$ is adjacent to $\tau$. By our previous argument, $\sigma$ and $\sigma'$ both lie in $d$-dimensional cones of $\textnormal{Ch}_n$. But these cones intersect only in their boundary. 

This proves $\abs{X} \subseteq \abs{\textnormal{Ch}_X}$ and the converse follows from Lemma \ref{lemma_cones_in_fan},(3).
\end{proof}
\end{prop}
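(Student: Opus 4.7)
The plan is to prove the two inclusions $\abs{\textnormal{Ch}_X} \subseteq \abs{X}$ and $\abs{X} \subseteq \abs{\textnormal{Ch}_X}$ separately. The inclusion $\abs{\textnormal{Ch}_X} \subseteq \abs{X}$ is essentially free: by construction $\abs{\textnormal{Ch}_X}$ is the union of cones $\cone(\curly{C})$ over chains $\curly{C}$ in $\curly{F}_X$, and Lemma \ref{lemma_cones_in_fan}(3) tells me each such cone already lies in $\abs{X}$. So the real work is in the reverse inclusion.

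For that I would start by applying Lemma \ref{lemma_cones_in_fan}(1) to the $d$-dimensional fan $X$: every point of $\abs{X}$ has heterogeneity at most $d+1$, so $\abs{X}$ is contained in the $d$-dimensional skeleton of $\textnormal{Ch}_n$. Refining the polyhedral structure of $X$ by intersecting with $\textnormal{Ch}_n$ then lets me assume that every cell of the refined complex $\curly{X}$ lies inside a single $d$-dimensional cone of $\textnormal{Ch}_n$, namely a cone $\cone(\curly{C})$ associated to some maximal chain $\curly{C}$ of subsets of $[n]$ (which is, a priori, not yet known to be a chain in $\curly{F}_X$).

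The key claim is then that whenever a $d$-dimensional cone $\rho = \cone(\curly{C})$ of $\textnormal{Ch}_n$ contains a maximal cell of $\curly{X}$ in its relative interior, all of $\rho$ must be contained in $\abs{X}$. I would prove this by contradiction using balancing. If $\rho$ were not entirely covered by $\abs{X}$, then some maximal cell $\sigma \subseteq \rho$ of $\curly{X}$ would have a codimension-one face $\tau$ lying in the relative interior of $\rho$, with no other maximal cell of $\curly{X}$ contained in $\rho$ adjacent to $\tau$. Balancing at $\tau$ still forces some other maximal cell $\sigma'$ of $\curly{X}$ to meet $\tau$, and by the refinement $\sigma'$ lies in some $d$-dimensional cone $\rho'$ of $\textnormal{Ch}_n$. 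Since $\tau \subseteq \rho \cap \rho'$ sits in the interior of $\rho$, and distinct maximal cones of a fan meet only in proper faces, we must have $\rho = \rho'$, contradicting the choice of $\tau$.

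Once this is in hand, the conclusion is formal: each such $\rho$ is entirely in $\abs{X}$, so its generating rays $v_{F_1}, \dots, v_{F_{d-1}}$ for the chain $\curly{C} = (F_1, \dots, F_d = E)$ all lie in $\abs{X}$, whence every $F_i \in \curly{F}_X$ and $\cone(\curly{C}) \in \textnormal{Ch}_X$. Thus every maximal cell of $\curly{X}$ is contained in $\abs{\textnormal{Ch}_X}$, giving $\abs{X} \subseteq \abs{\textnormal{Ch}_X}$. The main obstacle I anticipate is the topological bookkeeping in the balancing step: ensuring that the face $\tau$ honestly sits in the interior of $\rho$, so that the argument ruling out $\rho' \neq \rho$ actually applies. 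The refinement by $\textnormal{Ch}_n$ makes this a dimension check on how $d$-dimensional cones of the chain fan can share boundary, but stating it cleanly (rather than hand-waving at the picture) requires some care.
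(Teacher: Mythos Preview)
Your proposal is correct and follows essentially the same approach as the paper: refine by $\textnormal{Ch}_n$, use Lemma \ref{lemma_cones_in_fan}(1) to land in the $d$-skeleton, then argue via balancing that any $d$-cone of $\textnormal{Ch}_n$ meeting $\abs{X}$ in its interior is entirely contained in $\abs{X}$, with the reverse inclusion coming from Lemma \ref{lemma_cones_in_fan}(3). Your write-up is in fact slightly more explicit than the paper's in two places (deducing $F_i \in \curly{F}_X$ from the rays of $\rho$, and phrasing the contradiction as $\rho = \rho'$), and your caveat about the bookkeeping for $\tau$ lying in the interior of $\rho$ is exactly the point the paper glosses over with the phrase ``these cones intersect only in their boundary.''
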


\begin{proof}(of Proposition \ref{proof_prop_fan})

Proposition \ref{prop_chain_cone_support} tells us that we can equip $X$ with the polyhedral structure of $\textnormal{Ch}_X$: Balancing implies that all cones contained in a cone of $\textnormal{Ch}_X$ must have the same weight. Hence it is sufficient to show that $\curly{F}_X$ defines indeed a set of flats of a matroid. More precisely, we have to show the following:
\begin{enumerate}
 \item $E \in \curly{F}_X$.
 \item If $F,F' \in \curly{F}_X$, then $F \cap F' \in \curly{F}_X$.
 \item Let $F \in \curly{F}_X$ and assume $F_1,\dots,F_k$ are the minimal elements of $\curly{F}_X \wo \{F\}$ that contain $F$. Then $E \wo F$ is the disjoint union of $F_1 \wo F, \dots, F_k \wo F$.
\end{enumerate}
The first statement is trivial and the second follows from Lemma \ref{lemma_cones_in_fan}, (2).

To prove the third axiom, let $F \in \curly{F}_X$ and denote by $F_1,\dots,F_k$ the minimal elements of $\curly{F}_X$ containing it. Then $(F_i \wo F) \cap (F_j \wo F) = \emptyset$ by minimality and the second axiom. Hence we only have to prove that $\bigcup_{i=1}^k F_i = E$. By Proposition \ref{prop_chain_cone_support} and the fact that $X$ is pure of some dimension $d$, every maximal chain in $\curly{F}_X$ has the same length $d+1$. Let $G \in \curly{F}_X$ and $\curly{C} = (F_1,\dots,F_{d+1} = E)$ a maximal chain in $\curly{F}_X$. We define the \emph{rank} of $G$ to be $\rank(G) := i$, if $F_i = G$. This is independent of the actual chain: Otherwise we could combine two chains with $G$ occurring at different positions to form a chain of length greater than $d+1$. 

We will now prove the last axiom by induction on $c(F) := d + 1 - \rank(F)$. If $c(F) = 1$, then $k = 1$ and $F_1 = E$, so the statement is true. Now let $c(F) > 1$ and $j \in E \wo F$. By induction, there exists an $F' \in \curly{F}_X$ of rank $\rank(F) +2$ and with $F \subseteq F'$, such that $j \in F'$. We can now pick a chain
$$\curly{D} = (G_1,\dots,G_{c(F)} = F,G_{c(F)+2} = F',\dots,G_{d+1} = E),$$
where $\rank(G_i) = i$ for all $i$ (such a chain exists, as $X$ is pure). Then $\cone(\curly{D})$ is a codimension one cone. Using the fact that all chain cones are unimodular, the balancing equation at $\cone(\curly{D})$ reads:
$$v := \sum_{G \in \curly{G}(\curly{D})} w_G v_G \in V_{\cone(\curly{D})},$$
where $\curly{G}(\curly{D}) := \{G \in \curly{F}_X; F \subsetneq G \subsetneq F'\} \subseteq \{F_1,\dots,F_k\}$ and $w_G \in \Z \wo \{0\}$ denotes the weight of the corresponding maximal cone. 

As $v \in V_{\cone(\curly{D})}$, all entries $\{v_i, i \in F' \wo F\}$ agree (this notion is obviously well-defined in $\tpn{n}$). If we pick the obvious representative $- \sum_{i \in G} e_i \in \R^n$ for each $v_G$ and use the fact that for any $F_s,F_t \in \curly{G}(\curly{D})$ we have $(F_s \wo F) \cap (F_t \wo F) = \emptyset$, then for $i \in F' \wo F$ we have
$$v_i = \begin{cases}
         - w_G, &\textnormal{ if there is a } G \in \curly{G}(D) \textnormal{ with } i \in G\\
         0, &\textnormal{ otherwise.}
        \end{cases}$$

As $X$ is pure, $\curly{G}(\curly{D})$ is not empty. But this implies that there must be an $F_s \in \curly{G}(\curly{D})$ with $j \in F_s$.
%
\end{proof}

\section{Local-to-global tropical convexity}\label{section_local}

In classical convexity theory, there are various local-to-global principles. In this section, we prove Theorem \ref{conv_theorem}, a tropical analogue of a result proven by Tietze and Nakajima \cites{tkonvexheit,nkonvexekurven}. It states that any closed connected subset of $\R^n$, which is locally convex, is already convex. The main strategy of the proof follows a standard argument for classical convexity - though there is some extra work involved due to the fact that $\tpn{n}$ is not uniquely geodesic with its canonical metric (see Remark \ref{remark_geodesic}).

\begin{defn}
 Let $x \in \tpn{n}$. We define the \emph{tropical norm} of $x$ to be 
 $$\tnorm{x} := \max\{x_i\} - \min\{x_i\}.$$
 We also fix the following notations:
 \begin{align*}
  B_r^\trop(x) &:= \{y \in \tpn{n}; \tnorm{y-x} \leq r\},\\
  \partial B_r^\trop(x) &:= \{y \in \tpn{n}; \tnorm{y-x} = r\},\\ 
  \Imin (x) &:= \{i \in [n]; x_i \textnormal{ minimal}\},\\
  \Imax (x) &:= \{i \in [n]; x_i \textnormal{ maximal}\}.\\
 \end{align*}
For a compact set $S$ and a point $x$, we will also write
$$\tnorm{x-S} := \min\{\tnorm{x-s}; s \in S\}.$$
 
\end{defn}

The following all have easy and elementary proofs:
\begin{lemma}\newl
\begin{enumerate}
 \item $\tnorm{\cdot}$ is twice the quotient norm of the maximum norm on $\R^n$. In particular, it defines a norm on the $\R$-vector space $\tpn{n}$. 
 \item Let $r > 0$ and $P_{r,n}$ be the cube in $\R^n$ with vertices  $re_F, \emptyset \subseteq F \subseteq [n]$. Then $B_r^\trop(0)$ in $\tpn{n}$ is the image of $P_{r,n}$ under the quotient map. 
 \item $B_r^\trop(x)$ is a \emph{polytrope}, i.e.\ convex and tropically convex.
 \item If $x \in \tpn{n}$ and $\tconv\{0,x\}$ consists of actual line segments connecting points $0 = p_1,\dots,p_s = x$, then
 $$\tnorm{x} = \sum_{i=1}^{s-1} \tnorm{p_{i+1} - p_i}.$$
\end{enumerate} 
\end{lemma}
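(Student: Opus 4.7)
The plan is to verify each item separately; in each case the key is to choose a convenient representative in $\R^n$ for every class in $\tpn{n}$. For (1), I would compute the quotient norm $\inf_{\lambda \in \R} \max_i |x_i + \lambda|$ by noting that the optimal $\lambda$ shifts the interval $[\min_i x_i, \max_i x_i]$ to be symmetric about $0$; the optimum is $\lambda^* = -(\max_i x_i + \min_i x_i)/2$ with value $(\max_i x_i - \min_i x_i)/2$, so $\tnorm{x}$ is exactly twice the quotient norm. That $\tnorm{\cdot}$ is then a norm on $\tpn{n}$ is automatic from the general fact that quotienting a norm by a closed subspace yields a norm. For (2), I would choose the unique representative with $\min_i x_i = 0$; then $\tnorm{x} = \max_i x_i$, so $\tnorm{[x]} \le r$ is equivalent to $x \in [0,r]^n = P_{r,n}$, which is precisely the image claim.

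For (3), convexity of $B_r^\trop(x)$ is immediate: by part (2) it is a translate of the image of the convex cube $P_{r,n}$ under the linear quotient $\R^n \twoheadrightarrow \tpn{n}$. For tropical convexity the plan is to reduce to $B_r^\trop(0)$ and observe that any tropical combination $\lambda \odot y \oplus \mu \odot z$ is invariant in $\tpn{n}$ under adding a common constant to the pair $(\lambda, \mu)$, so I may assume $\lambda = 0$ and $\mu \ge 0$. Picking representatives $y, z \in [0,r]^n$ and writing $w_i := \max(y_i, z_i + \mu)$, a short estimate yields $\min_i w_i \ge \mu$ (from $w_i \ge z_i + \mu \ge \mu$) and $\max_i w_i \le r + \mu$, whence $\tnorm{[w]} \le r$. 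This little inequality is the one step in the whole lemma that is not completely formal, and I expect it to be the main (small) obstacle.

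For (4), the plan is just to invoke the explicit description of $\tconv\{0, x\}$ from Remark \ref{prelim_remark_troplinesegment}, specialised with the roles $(x,y) = (0,x)$. There the consecutive vertices satisfy $p_{j+1} - p_j = (x(I_j) - x(I_{j+1}))\, e_{F_j}$, where each $F_j = I_1 \cup \dots \cup I_j$ is a proper nonempty subset of $[n]$, so $\tnorm{e_{F_j}} = 1$. Since $x(I_j) - x(I_{j+1}) > 0$, this scalar factors out of $\tnorm{\cdot}$, and the sum telescopes to $x(I_1) - x(I_s) = \max_i x_i - \min_i x_i = \tnorm{x}$, as required.
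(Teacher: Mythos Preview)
Your proposal is correct in all four parts; the paper itself gives no proof beyond the remark that ``the following all have easy and elementary proofs,'' so your argument simply supplies the details the author left implicit, and your choices (optimising $\lambda$ in (1), normalising to $\min_i x_i = 0$ in (2), the $\mu \ge 0$ reduction and the estimate $\mu \le w_i \le r+\mu$ in (3), and the telescoping sum via Remark~\ref{prelim_remark_troplinesegment} in (4)) are exactly the natural ones.
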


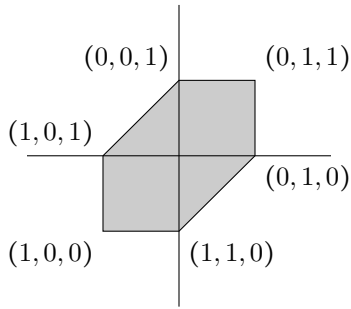
\begin{figure}[ht]
 \begin{tikzpicture}
  
  \filldraw[fill=light-gray] (-1,0) node[above left] {$(1,0,1)$}-- (0,1) node[above left]{$(0,0,1)$} -- (1,1) node[above right]{$(0,1,1)$} -- (1,0) node[below right]{$(0,1,0)$}-- (0,-1) node[below right] {$(1,1,0)$}-- (-1,-1) node[below left]{$(1,0,0)$}-- (-1,0);
  \draw (-2,0) -- (2,0);
  \draw (0,-2) -- (0,2);
 \end{tikzpicture}
\caption{The unit sphere $B_1^\trop(0)$ in $\tpn{3}$. The picture is drawn in two dimensions by setting the first coordinate to 0.}
\end{figure}

\begin{remark}\label{remark_geodesic}
 The tropical norm was already introduced in \cite{dstropicalconvexity} to study tree metrics. Joswig shows that $\tpn{n}$ is a \emph{geodesic} space \cite{jtropicalcombinatorics}: The tropical line segment between two points is a geodesic with respect to the metric induced by $\tnorm{\cdot}$. However, it is not uniquely geodesic. There are generally various paths from $x$ to $y$ whose length is $\tnorm{x-y}$ (see also Figure \ref{figure_intersect}).
 \begin{figure}[ht]
  \centering
  \begin{tikzpicture}
    \matrix{
   \filldraw[fill=light-gray] (-1.5,0) -- (0,1.5)-- (1.5,1.5) -- (1.5,0) -- (0,-1.5) -- (-1.5,-1.5) -- (-1.5,0);
   \filldraw[fill=light-gray] (1.5,1) -- (3,2.5) -- (4.5,2.5) -- (4.5,1) -- (3,-.5) -- (1.5,-.5) -- (1.5,1);
   \draw[line width=3pt] (1.5,0)-- (1.5,1);
   \draw (0,0) -- (2,0) -- (3,1);
   
   \fill[black] (0,0) node[left]{$x$} circle (2pt);
   \fill[black] (3,1) node[right]{$y$} circle (2pt);&
    \\
    };
  \end{tikzpicture}
  \caption{The set of points $z$ with $\tnorm{x-z} = \tnorm{z-y} = \tnorm{x-y}/2$ is a polytrope.}\label{figure_intersect}
 \end{figure}

\end{remark}

\begin{lemma}\label{lemma_imin}
 Let $x,y,z \in \tpn{n}$. Then for any point $p \in \tconv\{x,y\}$, we have
 $$\Imax(p-z) \subseteq \Imax(x-z) \cup \Imax(y-z).$$
 \begin{proof}
  Let $j \in \Imax(p-z)$ and assume $j \notin \Imax(x-z)$. We know that we can write $y = x + \sum_{i=1}^s \alpha_i e_{F_i}$, where $\alpha_i > 0$ and $F_i \subsetneq F_{i+1}$ for all $i$ and each summand corresponds to a vertex on the tropical line segment $\tconv\{x,y\}$. Hence $p = x + \sum_{i=1}^{k-1} \alpha_i e_{F_i} + \beta e_{F_k}$ for some $k \leq s$ and $\beta \leq \alpha_k$. Since $p_j - z_j = (x_j - z_j) + \sum_{i=1}^{k-1} \alpha_i (e_{F_i})_j + \beta (e_{F_k})_j$ is maximal and $x_j - z_j$ is not maximal, we must have that $j \in F_m$ for some $m \leq k$. In particular, $j \in F_l$ for all $l \geq k$, so $y_j - z_j$ is still maximal.
 \end{proof}
\end{lemma}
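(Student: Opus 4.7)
The plan is to exploit the explicit description of the tropical segment $\tconv\{x,y\}$ from Remark~\ref{prelim_remark_troplinesegment}. First I would fix representatives in $\R^n$ so that
\[ p = x + \sum_{i=1}^{k-1} \alpha_i\, e_{F_i} + \beta\, e_{F_k}, \]
with $F_1 \subsetneq F_2 \subsetneq \dots \subsetneq F_s$ a nested chain of subsets of $[n]$, $\alpha_i > 0$, and $0 \leq \beta \leq \alpha_k$, where $y$ is the terminal point of the segment obtained by exhausting the full sum up to $i = s$. Because the chain is nested, each index $j \in [n]$ has a threshold $m(j) := \min\{\ell : j \in F_\ell\}$ (possibly $\infty$) with the property that $j \in F_\ell$ if and only if $\ell \geq m(j)$.

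Given $j^* \in \Imax(p-z)$, I would then split into two cases according to whether $m(j^*) > k$ or $m(j^*) \leq k$. If $m(j^*) > k$, the coordinate $j^*$ has not yet moved along the segment, so $p_{j^*} = x_{j^*}$, while $p_j \geq x_j$ for every $j$ in the chosen representative; maximality of $p_{j^*} - z_{j^*}$ then forces $x_{j^*} - z_{j^*} \geq x_j - z_j$ for all $j$, i.e.\ $j^* \in \Imax(x-z)$.

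If instead $m(j^*) \leq k$, then by nesting $j^* \in F_\ell$ for every $\ell \geq k$, so along the remainder of the segment from $p$ to $y$ the coordinate $j^*$ absorbs the \emph{maximum} possible increment $\Delta^* := (\alpha_k - \beta) + \sum_{i=k+1}^{s} \alpha_i$. For any other index $j'$, the increment $y_{j'} - p_{j'}$ is at most $\Delta^*$; combined with $p_{j^*} - z_{j^*} \geq p_{j'} - z_{j'}$ from maximality, this yields $y_{j^*} - z_{j^*} \geq y_{j'} - z_{j'}$, so $j^* \in \Imax(y-z)$.

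The main obstacle is notational rather than conceptual: one has to choose representatives of $x$, $y$, $p$ consistently so that the coordinatewise inequalities $p_j \geq x_j$ and $y_j - p_j \leq \Delta^*$ hold simultaneously, and then verify the latter via a short case split on $m(j)$. Beyond the parametrization from Remark~\ref{prelim_remark_troplinesegment} no further machinery is required.
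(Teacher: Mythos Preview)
Your proposal is correct and follows essentially the same approach as the paper's proof: both use the nested-chain parametrization of the tropical segment from Remark~\ref{prelim_remark_troplinesegment} and argue via whether the index in question has entered the chain by step $k$. The paper phrases the dichotomy as a contrapositive (assume $j \notin \Imax(x-z)$, deduce $j \in F_m$ for some $m \leq k$, hence $j \in \Imax(y-z)$), whereas you make the case split on the threshold $m(j^*)$ explicit and spell out the inequality $y_{j'} - p_{j'} \leq \Delta^*$; this is the same argument with slightly more detail filled in.
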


\begin{defn}
 Let $X \subseteq \tpn{n}$ and $x,y \in X$.
 \begin{itemize}
  \item We call $X$ \emph{locally tropically convex}, if for every $x \in X$ there exists an $\epsilon >0$, such that $B_\epsilon^\trop(x) \cap X$ is tropically convex.
  \item A \emph{tropical path} in $X$ from $x$ to $y$ is an injective continuous map $\gamma:[0,1] \to X$, whose image is a concatenation of tropical line segments leading from $x$ to $y$. The \emph{length} $l(\gamma)$ of $\gamma$ is the length with respect to $\tnorm{\cdot}$, i.e. if $\gamma$ consists of tropical line segments connecting $x = x_0, \dots, x_k = y$, then 
 $$l(\gamma) = \sum_{i=1}^k \tnorm{x_i - x_{i-1}}.$$
 \item We define the \emph{distance} of $x$ and $y$ in $X$ to be
 $$d_X(x,y) := \inf\{l(\gamma); \gamma \textnormal{ a tropical path from $x$ to $y$.}\}.$$
 \end{itemize}

\end{defn}

\begin{lemma}\label{lemma_threepoints}
 Let $X \subseteq \tpn{n}$ be locally tropically convex and $x,y \in X$. Assume there is a point $z \in X$ such that the following hold:
 \begin{itemize}
  \item $d_X(x,z) = d_X(z,y) = d_X(x,y)/2$.
  \item $\tnorm{z- \tconv\{x,y\}}$ is minimal among all points fulfilling the first property.
  \item $\tconv\{x,z\}, \tconv\{y,z\} \subseteq X$.  
 \end{itemize}
Then $z \in \tconv\{x,y\}$, so $\tconv\{x,y\} \subseteq X$.
\begin{proof} Assume $z \notin \tconv\{x,y\}$. We define $F := \Imax(x-z), F' := \Imax(y-z)$. Then $e_F, e_{F'}$ are the outgoing slopes of the tropical line segments from $z$ to $x$ and $y$, respectively (see also Figure \ref{figure_closepoint} for an illustration). 
 
 Now choose $\epsilon >0$ small and let $$z' := (z + \epsilon e_F) \oplus (z + \epsilon e_{F'}) = z + \epsilon e_{F \cup F'}.$$ By local tropical convexity, this lies in $X$ for sufficiently small $\epsilon$.
 
 First of all, we see that $z'$ still fulfills the first property: Note that the concatenation of $\tconv\{x,z + \epsilon e_F\}$ and $\tconv\{z + \epsilon e_F,z'\}$ forms a tropical path in $X$ from $x$ to $z'$. Again assuming $\epsilon$ to be sufficiently small and using that $x \neq z$, we get 
 \begin{align*}
  d_X(x,z') &\leq \tnorm{x - (z + \epsilon e_F)} + \tnorm{z' - (z + \epsilon e_F)}\\
  &\leq (\tnorm{x-z} -\epsilon) + \epsilon\\
  &= \tnorm{x-z} = d_X(x,z).
 \end{align*}
Similarly, $d_X(y,z') \leq d_X(y,z)$. But as $z$ was already a midpoint, this implies equality. 

We now claim that $\tnorm{z' - \tconv\{x,y\}} < \tnorm{z - \tconv\{x,y\}} =:l$, which is a contradiction to our assumption. To see this, let $$M := \{p \in \tconv\{x,y\}; \tnorm{z-p} = l\}.$$ As $M = \tconv\{x,y\} \cap B_l^\trop(z)$, it is tropically convex. We know by Lemma \ref{lemma_imin}, that for any point $p$ in $M$, we have $\Imin(z-p) \subseteq F \cup F'$. Also note that by assumption $z \notin M$. We will now prove that $\Imax(z-p) \cap (F \cup F') = \emptyset$.

Assume $M = \{p\}$ is only a point. Then moving from $p$ along the tropical line segment $\tconv\{x,y\}$ strictly increases the distance to $z$. Let $G := \Imax(x-p), G' := \Imax(y-p)$. Then for small $\epsilon'$ we must have
$$\tnorm{z - (p+ \epsilon'e_G)} = \tnorm{(z-p) - \epsilon' e_G}> \tnorm{z-p}.$$
So if $\emptyset \neq \Imax(z-p) \cap F = \Imax(z-p) \cap \Imax(x-z)$, then we must have $G = \Imax(z-p) \cap F$. But then
$$\tnorm{z - (p+ \epsilon'e_G)} \leq \tnorm{z-p},$$
which is a contradiction. The same argument works for $F'$ and $G'$, so we see that $\Imax(z-p) \cap (F \cup F') = \emptyset$.

If $M$ is a tropical line segment, we can choose $p$ such that $\tconv\{x,y\}$ is locally at $p$ a line with slope $v_G$, $G:= \Imax(x-p)$ and 
$$\tnorm{z - (p \pm \epsilon' e_G)} = \tnorm{z-p}.$$
But this is only possible if either $\Imin(z-p) \cup \Imax(z-p) \subseteq G $ or $(\Imin(z-p) \cup \Imax(z-p)) \cap G = \emptyset$. So if $\emptyset \neq \Imax(z-p) \cap F$, we must again have $G = \Imax(z-p) \cap F$, so neither of the above two possibilities would hold. Again, the same argument works for $F'$.

In either case, we see that $\Imax(z-p) \cap (F \cup F') = \emptyset$ and $\Imin(z-p) \subseteq (F \cup F')$. But then 
$$\tnorm{z'-p} = \tnorm{(z-p)+\epsilon e_{F \cup F'}} < \tnorm{z-p}.$$
This contradicts our assumption. Hence we must have $z \in \tconv\{x,y\}$. 
\end{proof}
\end{lemma}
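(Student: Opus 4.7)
The plan is to argue by contradiction: assuming $z \notin \tconv\{x,y\}$, I will construct a perturbation $z' \in X$ that is still a midpoint in the sense of the first bullet but satisfies $\tnorm{z' - \tconv\{x,y\}} < \tnorm{z - \tconv\{x,y\}}$, contradicting the minimality assumption. The natural perturbation is dictated by the outgoing directions of the two tropical segments at $z$: by Remark \ref{prelim_remark_troplinesegment} the initial slopes of $\tconv\{z,x\}$ and $\tconv\{z,y\}$ are $e_F$ and $e_{F'}$, where $F := \Imax(x-z)$ and $F' := \Imax(y-z)$. For small $\epsilon>0$ I set
$$z' := z + \epsilon e_{F \cup F'} = (z + \epsilon e_F) \oplus (z + \epsilon e_{F'}).$$
The two summands lie on the tropical segments in $X$, so local tropical convexity places $z'$ itself in $X$ once $\epsilon$ is small.

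The verification that $z'$ is a midpoint is the easy half: concatenating the piece of $\tconv\{x,z\}$ from $x$ to $z+\epsilon e_F$ (of tropical length $\tnorm{x-z}-\epsilon$) with the short tropical segment from $z+\epsilon e_F$ to $z'$ (of length at most $\epsilon$, in $X$ by local convexity) gives $d_X(x,z') \leq \tnorm{x-z} = d_X(x,y)/2$. The symmetric argument via $z+\epsilon e_{F'}$ gives $d_X(y,z') \leq d_X(x,y)/2$, and the triangle inequality forces equality in both.

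The main obstacle is then the distance estimate $\tnorm{z' - \tconv\{x,y\}} < \tnorm{z - \tconv\{x,y\}}$. Writing $l := \tnorm{z-\tconv\{x,y\}}$ and $M := \tconv\{x,y\} \cap B_l^\trop(z)$, the intersection of two tropically convex sets is tropically convex, and since $M$ is contained in the one-dimensional polygonal path $\tconv\{x,y\}$ it is either a single point or a tropical segment lying on a single edge of $\tconv\{x,y\}$. For any $p \in M$, Lemma \ref{lemma_imin} immediately yields $\Imin(z-p) = \Imax(p-z) \subseteq F \cup F'$. The delicate ingredient is the dual statement: there exists $p \in M$ with $\Imax(z-p) \cap (F \cup F') = \emptyset$. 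I would establish this by a first-order analysis of the function $q \mapsto \tnorm{z-q}$ along $\tconv\{x,y\}$ at $p$, using the elementary rule that adding $te_S$ to a vector $v$ changes $\tnorm{v}$ by $+t$, $0$, or $-t$ according to how $S$ meets $\Imax(v)$ and $\Imin(v)$. Any index $j \in \Imax(z-p) \cap F$ would let one move $p$ a little way along $\tconv\{x,y\}$ (in the direction dictated by the local slope, which necessarily involves some coordinate of $F$ because of $j$) without increasing $\tnorm{z-q}$, contradicting either unique minimality of $p$ (when $M$ is a point) or the position of $p$ as an endpoint of $M$ (when $M$ is a segment); the same reasoning handles $F'$.

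Once both coordinate conditions on $z-p$ are available, the same additive-decrement rule applied in reverse gives $\tnorm{z'-p} = \tnorm{(z-p) + \epsilon e_{F\cup F'}} = l - \epsilon < l$, contradicting the minimality defining $l$. Therefore $z \in \tconv\{x,y\}$, and since $\tconv\{x,y\} = \tconv\{x,z\} \cup \tconv\{z,y\}$ whenever $z$ lies on the tropical segment between $x$ and $y$, the hypothesis $\tconv\{x,z\}, \tconv\{z,y\} \subseteq X$ yields $\tconv\{x,y\} \subseteq X$ as desired.
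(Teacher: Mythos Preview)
Your proposal is correct and follows essentially the same route as the paper's own proof: the same perturbation $z' = z + \epsilon e_{F\cup F'}$, the same midpoint verification, the same set $M$ of closest points, the same use of Lemma~\ref{lemma_imin} for $\Imin(z-p)$, and the same first-order analysis of $q\mapsto\tnorm{z-q}$ along $\tconv\{x,y\}$ to rule out $\Imax(z-p)\cap(F\cup F')\neq\emptyset$. The paper makes the latter step explicit via a two-case argument (whether $M$ is a point or a nondegenerate tropical segment), which is exactly the content you would need to supply where you write ``first-order analysis''; otherwise the arguments coincide.
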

\begin{figure}[ht]
 \centering
 \begin{tikzpicture}[scale=2]
  \fill[fill=light-gray] (0.2,0) -- (1,0) -- (1.8,.8) -- (1.8,1.6) -- (1,1.6) -- (0.2,.8) -- (0.2,0);
  \draw[gray] (1.8,.8) -- (1.8,1.6) -- (1,1.6) -- (0.2,.8) -- (0.2,0);
  \draw (1.8,1.6) node [below left] {\small $B_l^\trop(z)$};
  \draw (0,0)-- (1,0) -- (2,1);
  \draw (1.8,0.8) -- (1,0.8) -- (0.2,0);
  \fill[black] (0,0) circle (1pt) node[left] {$x$};
  \fill[black] (2,1) circle (1pt) node[right] {$y$};
  \fill[black] (1,.8) circle (1pt) node[left] {$z$};
  \draw[->, line width=2pt] (1,.8) -- (0.5,0.3) node[above = 5pt] {$e_F$};
  \draw[->, line width=2pt] (1,.8) -- (1.5,.8) node[above] {$e_{F'}$};
  \draw[dashed] (.5,.3) -- (1,.3) -- (1.5,.8);
  \fill[black] (1,.3) circle (1pt) node[above]{$z'$};
  \fill[black] (1.5,.5) circle (1pt) node[below right]{$p$};
  \draw[->, line width=2pt] (1.5,.5) -- (1.25,0.25) node[below right] {$e_G$};
  \draw[->, line width=2pt] (1.5,.5) -- (1.75,0.75) node[below right] {$e_{G'}$};

 \end{tikzpicture}
  \caption{Constructing a point closer to the tropical line segment using local tropical convexity.}\label{figure_closepoint}
\end{figure}
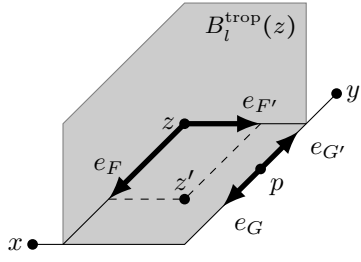

\newpage

 \begin{proof}(of Theorem \ref{conv_theorem})
  Let $x,y \in X$ and set $r := d_X(x,y)$. 
  
  Since $X$ is closed and locally tropically convex, there exists a midpoint, i.e. a point $z \in X$ such that
  $$d_X(x,z) = d_X(y,z) = \frac{1}{2}r.$$

%
  Note that the set of midpoints of $x$ and $y$ is a compact set. It is obviously closed and must be a subset of $B_{r/2}^\trop(x) \cup B_{r/2}^\trop(y)$. Hence we can choose $z$ to have minimal distance to $\tconv\{x,y\}$.
  
  In this manner, we recursively construct points $z_{i,n} \in X$ with $1 \leq n, 0 \leq i \leq 2^n$, such that
  \begin{itemize}
   \item $z_{0,n} = x, z_{n,n} = y$ and $z_{i,n} = z_{2i,n+1}$.
   \item $z_{2i+1,n+1}$ is a midpoint of $z_{i,n}$ and $z_{i+1,n}$ and it has minimal distance to $\tconv\{z_{i,n}, z_{i+1,n}\}$.
  \end{itemize}

  In particular, we have $d_X(z_{i,n},z_{i+1,n}) = r/2^n$. Now we have
  \begin{align*}
   \tnorm{z_{i,n} - x} &\leq d_X(z_{i,n},x) \leq (i/2^m) r \leq r
  \end{align*}
  for all $i$ and $n$, so $z_{i,n} \in B_r^\trop(x) \cap X =: B$, which is a compact set. Hence we can choose a global $\delta > 0$ such that for all $x \in B$, the set $B_\delta^\trop(x) \cap X$ is tropically convex.
  
  By choosing $n$ large enough, we can now assume that $r/2^{n+1} < \delta$. Then for each $i$, $B_\delta^\trop(z_{2i+1,n+1})$ contains both $z_{i,n}$ and $z_{i+1,n}$, so their tropical convex hull is contained in $X$. Applying Lemma \ref{lemma_threepoints} inductively, we see that $\tconv\{x,y\} \subseteq X$.

 \end{proof}

\begin{corollary}
 Let $X$ be a connected tropical variety in $\tpn{n}$, which is locally a multiple of a matroidal fan, i.e.\ $\Star_X(p) = k_p \cdot B(M(p))$ for each $p \in X$, some $k_p \in \Z$ and some matroid $M(p)$. Then $X$ is supported on a tropical linear space.
\end{corollary}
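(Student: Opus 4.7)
The plan is to chain together Theorem~\ref{conv_theorem} and Theorem~\ref{main_theorem}. Since $|X|$ is automatically closed as the support of a polyhedral complex, and connected by hypothesis, I would first establish that $|X|$ is locally tropically convex; Theorem~\ref{conv_theorem} would then upgrade this to global tropical convexity of $|X|$, and Theorem~\ref{main_theorem} would finish the job by producing a valuated matroid $(M,w)$ with $|X| = |B(M,w)|$.

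To verify local tropical convexity, fix $p \in |X|$ and refine $X$ if necessary so that $p$ becomes a vertex. Using the description of $\Star_X(p)$ recalled in Section~\ref{section_prelim}, I would choose $\epsilon > 0$ small enough (using that only finitely many cells of $X$ meet a neighborhood of $p$, and the cells not containing $p$ stay at positive distance from $p$) so that
$$|X| \cap B_\epsilon^\trop(p) \;=\; (p + |\Star_X(p)|) \cap B_\epsilon^\trop(p).$$
By hypothesis $|\Star_X(p)| = B(M(p))$, which is tropically convex by Proposition~\ref{prop_matroid_fan_is_convex}. The elementary identity $(a+c) \oplus (b+c) = (a \oplus b) + c$ in $\R^n$ shows that ordinary translation preserves tropical convexity, so $p + |\Star_X(p)|$ is tropically convex; the tropical ball $B_\epsilon^\trop(p)$ is a polytrope, hence tropically convex; and intersections of tropically convex sets are tropically convex. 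This identifies $|X| \cap B_\epsilon^\trop(p)$ as tropically convex and completes the local step.

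With local tropical convexity in hand, Theorem~\ref{conv_theorem} gives that $|X|$ is tropically convex. Theorem~\ref{main_theorem} then immediately yields $|X| = |B(M,w)|$ for a valuated matroid $(M,w)$ on $[n]$, so $X$ is supported on a tropical linear space. The only place where any real care is required is the identification of $|X|$ with $p + |\Star_X(p)|$ inside a small tropical ball; everything after that is a direct invocation of the main theorems of the paper.
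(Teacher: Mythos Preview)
Your argument is correct and is exactly the intended one: the corollary is stated in the paper without an explicit proof, immediately following Theorem~\ref{conv_theorem}, precisely because it is the straightforward combination of Proposition~\ref{prop_matroid_fan_is_convex} (to get local tropical convexity from the matroidal local models), Theorem~\ref{conv_theorem} (local-to-global), and Theorem~\ref{main_theorem}. Your care in identifying $|X|\cap B_\epsilon^\trop(p)$ with $(p+|\Star_X(p)|)\cap B_\epsilon^\trop(p)$ and in noting that translations and intersections preserve tropical convexity fills in precisely the details the paper leaves implicit.
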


\newpage

\printbibliography

\end{document}